\documentclass[11pt,reqno]{amsart}
\usepackage[foot]{amsaddr}
\usepackage{amsthm,amssymb,amsfonts}
\usepackage{amsmath}
\usepackage{amscd} 
\usepackage{setup}
\usepackage{microtype}

\usepackage[normalem]{ulem}
\usepackage[all]{xy}
\usepackage{manfnt}

\usepackage{cases}


\newcommand{\N}{\mathbb{N}}
\renewcommand{\P}{\mathbb{P}}

\newcommand{\R}{\mathbb{R}}

\usepackage{bbm}
\renewcommand{\1}{\mathbf{1}}


\newcommand{\B}{\mathcal{B}}

\newcommand{\F}{\mathcal{F}}
\newcommand{\G}{\mathcal{G}}

\newcommand{\X}{\mathcal{X}}



\newcommand{\ceq}{\coloneqq} 



\newcommand{\e}{\varepsilon}






\DeclarePairedDelimiter\normd{\lVert}{\rVert} 

\newcommand{\bpar}[1]{\left(#1\right)}





\newcommand{\ind}[1]{\1_{[#1]}} 


\newcommand{\E}[1]{{\mathbf E}\left[#1\right]}				

\newcommand{\p}[1]{{\mathbf P}\left\{#1\right\}}


\newcommand{\eqdist}{\ensuremath{\stackrel{\mathrm{d}}{=}}}

\usepackage{subfig}
\usepackage{adjustbox}

\newcommand{\bnum}{b_n}

\newcommand{\tpath}[2]{\xymatrix@1{{#1}\ar[r]|{\G}&{#2}} } 
\newcommand{\tpathn}[2]{\xymatrix@1{{#1}\ar[r]|{\G_n}&{#2}} }

\def\tagform@#1{\maketag@@@{[\ignorespaces#1\unskip\@@italiccorr]}}

\makeatletter
\newcommand{\leqnomode}{\tagsleft@true\let\veqno\@@leqno}
\newcommand{\reqnomode}{\tagsleft@false}
\makeatother

\title{Temporal connectivity of Random Geometric Graphs}

\author[A. Brandenberger]{Anna Brandenberger$^*$}
\address{$^*$Department of Mathematics, MIT 
\texttt{abrande@mit.edu}}

\author[S. Donderwinkel]{Serte Donderwinkel$^{\dagger}$}
\address{$^{\dagger}$Bernoulli Institute and CogniGron, University of Groningen
\texttt{s.a.donderwinkel@rug.nl}
}

\author[C. Kerriou]{C\'eline Kerriou$^\ddagger$}
\address{$^\ddagger$Department of Mathematics and Computer Science, Universit{\"a}t zu K{\"o}ln
\texttt{ckerriou@math. uni-koeln.de}
}

\author[G. Lugosi]{G\'abor Lugosi$^\mathsection$}
\address{$^\mathsection$
ICREA, Pg. Lluís Companys 23, 08010 Barcelona, Spain; 
Department of Economics and Business, Pompeu Fabra University; Barcelona School of Economics
\texttt{gabor.lugosi@gmail.com}
}

\author[R. Mitchell]{Rivka Mitchell$^\circ$}
\address{$^\circ$Department of Mathematics, University of Oxford
\texttt{rivka.mitchell@maths.ox.ac.uk}
}

\begin{document}

\begin{abstract}
A temporal random geometric graph is a random geometric graph in which all edges are endowed with a uniformly random time-stamp, representing the time of interaction between vertices.
In such graphs, paths with increasing time stamps indicate
the propagation of information. We determine a threshold for the existence of monotone increasing paths between all pairs of vertices in temporal random geometric graphs. 
{The results reveal that temporal connectivity appears at a significantly larger edge density than simple connectivity of the underlying random geometric graph. This is in contrast with Erd\H{o}s-R\'enyi random graphs in which the thresholds for temporal connectivity and simple connectivity are of the same order of magnitude.}
Our results hold for a family of ``soft" random geometric graphs as well as the standard random geometric graph.  
\end{abstract}

\maketitle 

\begin{figure}[hbtp]
    \centering
    \includegraphics[width=.49\textwidth]{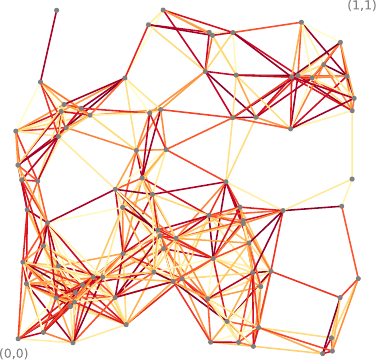}
    \includegraphics[width=.49\textwidth]{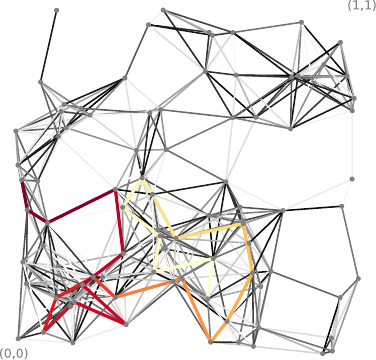}
    \caption{A temporal random geometric graph on $[0,1]^2$ where edges that are coloured darker arrive later (left), and its longest monotone increasing path (right).}
    \label{fig:intro}
\end{figure}

\newpage 

\section{Introduction}

A \emph{temporal graph} is an edge-labeled graph $\G = (G, \sigma)$ where the underlying graph $G = (V(G),E(G))$ is a finite simple graph and $\sigma: E(G)\rightarrow \{1,\dots, |E(G)|\}$ is an ordering of the edges. In this paper we consider random temporal graphs in which $\sigma(E(G))$ is a uniform random permutation. Equivalently, one can generate a random temporal graph using independent, uniform, labels: each edge $e\in E(G)$ is assigned a label $\tau_e\sim \mathrm{Unif}[0,1]$.  For $e\in E(G)$ we say that $\tau_e$ is the \emph{time-stamp} of $e$, and for $f\in E(G)\setminus \{e\}$, we say that $e$ \emph{precedes} $f$ if $\tau_e \le \tau_f$. We use this construction in the sequel, and denote the resulting graph as $\G = (G, (\tau_e)_{e\in E(G)})$

The study of such graphs is largely motivated by the modeling of dynamic networks, see Tang et al.\ \cite{Tang2013ApplicationsOT}. A temporal graph represents a network where interactions occur at particular times. In temporal networks one may study whether it is possible to transmit information, or an infectious disease, from one individual to another, as a chain of transmissions can only occur along interactions that are increasing in time.

Let $\G = (G, (\tau_e)_{e\in E(G)})$ be a temporal graph. For vertices $u,v\in V(G)$ we say that  there is a \emph{temporal path} from $u$ to $v$, denoted $\tpath{u}{v}$, if there exists a path from $u$ to $v$ comprised of non-decreasing edge time-stamps. Further, we let $\ell(\tpath{u}{v})$ denote the length of the shortest temporal path from $u$ to $v$, that is, the minimal number of edges on any monotone increasing path from $u$ to $v$. 
It is readily seen that the existence of a temporal path $\tpath{u}{v}$ does not imply the existence of a temporal path $\tpath{v}{u}$, and furthermore the existence of temporal paths $\tpath{u}{v}$ and $\tpath{v}{w}$ does not imply the existence of a temporal path $\tpath{u}{w}$. Following the terminology of \cite{becker2023giant}, we say that a vertex $u\in \G$ is a \emph{temporal source} if  there exist temporal paths $\tpath{u}{v}$ for all $v\in\G$.
Moreover, we call $\G$ \emph{temporally connected}
if every vertex of $\G$ is a temporal source.

The temporal random graph model can be seen as a random version of a well-known combinatorial problem posed by Chv\'atal and Koml\'os~\cite{chvatal1971some}, 
which asks for the minimal value of the length of the longest monotone path, when considering the edge orderings of the complete graph on $n$ vertices, henceforth denoted by $K_n$. This problem is well studied, with lower and upper bounds remaining far apart until recent years, see, e.g.,\ Graham and Kleitman \cite{graham1973increasing},
Calderbank, Chung, and Sturtevant \cite{calderbank1984increasing}, Bucić et al. \cite{bucic2020nearly}. The average case with base graph $G = K_n$, was first considered by Lavrov and Loh \cite{lavrov2016increasing}, and Martinsson \cite{martinsson2019most} later proved that with high probability, there exists a monotone path of length $n-1$. When $G$ is an Erd\H{o}s--R\'enyi random graph, the resulting temporal graph is called the \emph{random simple temporal graph}, first studied by Angel, Ferber, Sudakov and Tassion~\cite{Angel_Ferber_Sudakov_Tassion_2020}. 

Temporal connectivity of random simple temporal graphs   was studied in detail by Casteigts, Raskin, Renken and Zamarev \cite{casteigts2021} and Becker et al.\ \cite{becker2023giant}.
In particular, it is shown in 
\cite{becker2023giant} that the threshold for temporal connectivity of random simple temporal graphs is $\sim 3\log n/n$, which is just a factor of $3$ larger than the connectivity threshold of the underlying Erd\H{o}s-R\'enyi random graph.
More recent works have studied the shortest and longest increasing paths between typical vertices
(Broutin, Kamčev, and Lugosi~\cite{NicNinaGabor}) and the size of the largest temporal clique (Atamanchuk, Devroye, and Lugosi~\cite{atamanchuk2024size,becker2023giant}). 

In this paper, we initiate the study of \emph{temporal random geometric graphs}. 
In this model the vertices of the underlying graph $G_n$ correspond to randomly drawn points in a Euclidean space. Such graphs may be better suited for modeling epidemiological processes than random simple temporal graphs, as the geometry allows for spatial closeness (or closeness encoding similarity) of individuals to affect interaction probabilities.
Our main results establish thresholds for temporal 
connectivity in certain temporal random geometric graphs.

Let $n \in \N$, $K: \R^+ \to [0,1]$, $r_n \in \R_+$, and let $\normd{\cdot}$ denote the Euclidean distance on {the torus} $[0,1]^d$. A \textit{soft random geometric graph} $G_n = (\X_n, K, r_n)$ in dimension $d \geq 2$ is a graph on $n$ uniform random points $\X_n$ in the unit torus $[0,1]^d$, such that for every pair of vertices $u,v\in \X_n$, an edge exists with probability
\[ \p{ (u,v) \in E(G_n)\mid \X_n} = K\bpar{\frac{\normd{u-v}}{r_n} }.\] 
The edges are conditionally independent given $\X_n$. When $K(x) \ceq \ind{x \leq 1}$, $G_n$ is called a \textit{hard random geometric graph}. A \emph{temporal random geometric  graph} with $n$ vertices is a temporal graph $\G_n = (G_n, (\tau_e)_{e\in E(G_n)})$ where $G_n = (\mathcal{X}_n, K, r_n)$ is a soft random geometric graph on $n$ vertices. Our main result shows that under regularity conditions on $K$, with high probability, the graph becomes temporally connected when the radius $r_n$ exceeds a constant multiple of $n^{-1/(d+1)}$, while the graph is temporally disconnected if $r_n$ is smaller than another constant times $n^{-1/(d+1)}$.

\begin{thm}\label{thm:main_temp} Let $\G_n = (G_n, (\tau_e)_{e \in E(G_n)})$ be a temporal random geometric graph with $G_n = (\mathcal{X}_n, K,r_n)$ satisfying the following: 
\makeatletter \def\tagform@#1{\maketag@@@{[\ignorespaces#1\unskip\@@italiccorr]}}\makeatother
\leqnomode
\begin{align}
\tag{\color{blue}\bf A1}\label{a1}
&\text{There exists } \alpha > 0 \text{ such that } K(x) \geq \alpha \text{ for all } x \leq 1. \\ 
\tag{{\color{blue}\bf A2}}\label{a2}
&\text{For } x > 1 \text{, }K(x) \leq \beta x^{-d}e^{- (x+1) \log (x+1)} \text{ for some $\beta > 0$}.
\end{align}
\reqnomode
\makeatletter
\def\tagform@#1{\maketag@@@{(\ignorespaces#1\unskip\@@italiccorr)}}
\makeatother
    Then, there exist constants $c_d, C_d> 0$  such that
\begin{enumerate}[label=(\roman*)]
      \item \label{thm:main_upper} if $r_n \leq c_d n^{-1/(d+1)}$ for all $n$ sufficiently large, then $\G_n$ is a.a.s.\ temporally disconnected, 
    \item  \label{thm:main_lower} if $r_n \geq C_d n^{-1/(d+1)}$ for all $n$ sufficiently large, then $\G_n$ is a.a.s.\ temporally connected. 
\end{enumerate}
\end{thm}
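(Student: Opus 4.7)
The plan is to prove (i) by a first-moment bound on monotone paths between two far-apart vertices, and (ii) by a multi-scale chaining argument across a joint partition of space and time. The threshold $n^{-1/(d+1)}$ is natural because the typical degree is $\Theta(n r_n^d)$, the typical longest monotone path from a vertex has length of that order, and since each step covers Euclidean distance $\Theta(r_n)$, the maximum ballistic reach equals $\Theta(n r_n^{d+1})$, which must cover a torus of unit diameter.

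\smallskip
\noindent\textbf{Part (i): disconnection.} Fix vertices $u, v$ with $D \ceq \normd{u-v} \ge 1/4$; such a pair exists whp by a pigeonhole on a fixed grid of the torus. A path $u = x_0, \ldots, x_k = v$ exists and has monotone time-stamps with probability $(k!)^{-1}\prod_{i=1}^{k} K(\normd{x_{i-1}-x_i}/r_n)$. Summing over intermediate vertex choices,
\[
E_k \ceq \E{\#\{\text{monotone paths of length } k \text{ from } u \text{ to } v\}} \le \frac{n^{k-1}}{k!}\bpar{C_K r_n^d}^k \phi_k(D),
\]
where $C_K \ceq \int K(\normd{z})\,\mathrm{d}z < \infty$ by \eqref{a2} and $\phi_k$ is the density of a $k$-fold convolution of the normalised step kernel. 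A Cram\'er large-deviation estimate yields $\phi_k(D) \lesssim e^{-cD^2/(kr_n^2)}$; combining with Stirling, $E_k \lesssim n^{-1}(enC_K r_n^{d+1}/(kr_n))^k e^{-cD^2/(kr_n^2)}$. With $r_n = c_d n^{-1/(d+1)}$, the dominant term is $k \approx D/r_n$, at which the per-step factor equals $e C_K c_d^{d+1}/D$; for $c_d$ small this base lies below $1$, so $E_k$ is exponentially small in $n^{1/(d+1)}$. The tail $k \gg D/r_n$ is similarly handled since the Stirling factor decays super-polynomially once $k \gg nC_K r_n^d$. Markov's inequality then gives $\p{\text{monotone path from } u \text{ to } v} = o(1)$, so $\G_n$ is temporally disconnected whp.

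\smallskip
\noindent\textbf{Part (ii): connectivity.} Partition $[0,1]^d$ into cubes of side $L = \eta r_n$ with $\eta$ small enough that any two points in adjacent cubes are within Euclidean distance $r_n$ (so by \eqref{a1} the edge probability is at least $\alpha$). Partition $[0,1]$ into $M_t = 1/(\kappa r_n)$ time intervals of length $\kappa r_n$, with $\kappa$ small enough that $M_t$ exceeds the cube-diameter $\Theta(1/r_n)$ of the torus. Let $\Ec$ be the event that for every vertex $u$, every cube $C'$ adjacent to $u$'s cube, and every time interval $J$, at least one edge connects $u$ to a vertex of $C'$ with time-stamp in $J$. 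The expected number of such edges is $\Theta(\alpha\eta^d\kappa\, n r_n^{d+1}) = \Theta(\alpha\eta^d\kappa\, C_d^{d+1})$, so the per-triple failure probability is $\exp(-\Omega(C_d^{d+1}))$; for $C_d$ large this defeats the $O(n^2)$ union bound over all $(u, C', J)$ triples, and $\Ec$ holds whp. On $\Ec$, a monotone path from any $u$ to any $v$ is built as a greedy walk: use the smallest-time-stamp edge incident to $u$ as the first step (this lies in the first interval since $u$ has $\Theta(nr_n^d)$ neighbours and hence minimum incident time-stamp $O(1/(nr_n^d)) = o(\kappa r_n)$), then in each successive interval advance one cube toward $v$'s cube using an edge guaranteed by $\Ec$. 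After $O(1/r_n)$ cube-steps we arrive at $v$'s cube and enter $v$ via an edge in one of the last few intervals (which exists by the symmetric statement that $v$'s largest incident time-stamp exceeds $1 - o(1)$ whp).

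\smallskip
\noindent\textbf{Main obstacle.} The technical crux will be calibrating the constants $\eta, \kappa, C_d$ in (ii) so that simultaneously (a) adjacent cubes satisfy \eqref{a1}, (b) the number of intervals exceeds the cube-chain length, and (c) the Chernoff-union estimate for $\Ec$ beats the $O(n^2)$ count of triples. In (i), the delicate point is the tail regime $k \gg D/r_n$, where the Cram\'er bound weakens and one relies on the super-polynomial decay of the Stirling factor; one must also verify that the soft-kernel tails permitted by \eqref{a2} do not spoil the effective localisation used in the convolution estimate.
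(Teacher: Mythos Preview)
Your Part (i) is essentially correct and close in spirit to the paper's argument, though packaged differently. The paper splits the first-moment count into two pieces: \emph{long} temporal paths (length $\ge 1/(3r_n)$), where the $1/k!$ factor alone kills the expectation regardless of endpoints, and \emph{short} paths between far-apart points, where assumption \textup{[\ref{a2}]} is invoked directly to show that $k<1/(3r_n)$ steps cannot span Euclidean distance $\Theta(1)$. Your unified treatment via a Cram\'er-type bound on the $k$-fold step convolution is a reasonable alternative; just note that the Gaussian form $e^{-cD^2/(kr_n^2)}$ is only the moderate-deviation approximation and, for the soft tails permitted by \textup{[\ref{a2}]}, the true rate behaves like $v\log v$ rather than $v^2$ when $k\ll D/r_n$. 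This does not break the argument, but the bound as written is not literally correct in that regime.

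Part (ii), however, has a genuine gap that no calibration of constants can repair. Your event $\mathcal{E}$ requires that every (vertex, adjacent cube, time interval) triple carries at least one usable edge. As you correctly compute, the expected number of such edges is $\Theta(\alpha\eta^d\kappa\, n r_n^{d+1}) = \Theta(\alpha\eta^d\kappa\, C_d^{d+1})$, which is a \emph{constant} independent of $n$. Hence the per-triple failure probability is a fixed constant $\exp(-\Theta(C_d^{d+1}))$, and the union bound over $\Theta(n\cdot r_n^{-1})=\Theta(n^{(d+2)/(d+1)})$ triples diverges for every fixed $C_d$. Your greedy chaining would establish connectivity only for $r_n \gg (\log n/n)^{1/(d+1)}$, a polylogarithmic factor above the actual threshold; the obstacle you flag is not one of tuning $\eta,\kappa,C_d$ but a structural failure of the approach at the correct scale.

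The paper circumvents this with a three-region scheme. Near the source and sink (regions $R_1,R_3$, each $\Theta(n^\varepsilon)$ boxes wide) it uses \emph{generous} time intervals of length $n^{-\varepsilon}$, so that the expected edge count per step is $\Theta(n^{1/(d+1)-\varepsilon})$ and every step succeeds with probability $1-\exp(-n^{\Omega(1)})$; a union bound then shows that \emph{every} box on the boundary diagonal is reached. In the bulk region $R_2$, the time intervals have length $\Theta(r_n)$ and each step succeeds only with constant probability---exactly your situation---but now the argument is a directed-percolation Peierls bound: if no open path crosses $R_2$, the boundary of the reachable cluster forms a dual-lattice contour of length at least $n^\varepsilon/2$, whose edges are closed independently with probability bounded away from $1$, and a first-moment count over such contours yields failure probability $\exp(-\Omega(n^\varepsilon))$. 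The missing idea is that in the bulk one cannot demand every step succeed; one must show that \emph{some} monotone path survives, and that requires percolation rather than a union bound.
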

Observe that by monotonicity of temporal connectivity, Theorem \ref{thm:main_temp} implies that \ref{thm:main_upper} holds under just assumption [\ref{a2}] and \ref{thm:main_lower} holds under just assumption [\ref{a1}].

Note that the choice of $K(x) = \ind{x \leq 1}$ satisfies the regularity conditions [\ref{a1}] and [\ref{a2}], and therefore as a corollary of Theorem~\ref{thm:main_temp} we obtain a threshold for a.a.s.\ temporal connectivity of temporal hard random geometric graphs.
The assumptions [\ref{a1}] and [\ref{a2}] ensure that there are sufficiently many vertices at graph distance $O(r_n)$ to any vertex, and that there are not too many long edges in the soft random geometric graph.

Observe that the order of magnitude $n^{-1/(d+1)}$ of the critical radius is significantly larger than that of the critical radius for the connectivity of the underlying random geometric graph. Indeed, as it is well known from the theory of random geometric graphs, random geometric graphs become connected when the average degree of the graph is of the order $\log n$ (see \cite{Penrose2013}). On the other hand, our results show that temporal connectivity only occurs when the average degree becomes of the order of $n^{1/(d+1)}$. This is in stark contrast with Erd\H{o}s-R\'enyi random graphs in which temporal connectivity and simple connectivity both occur when the average degree is of the order of $\log n$.

Theorem \ref{thm:main_temp} establishes the order of magnitude of the temporal connectivity threshold. It is natural to conjecture that, similarly to the case of Erd\H{o}s-R\'enyi graphs, there is a sharp threshold for temporal connectivity, that is, there exists a constant $\kappa_d$ (depending on the kernel $K$) such that, for all $\varepsilon >0$, if $r_n \le (\kappa_d-\varepsilon)n^{-1/(d+1)}$, then 
$\G_n$ is a.a.s.\ temporally disconnected, while for 
$r_n \ge (\kappa_d+\varepsilon)n^{-1/(d+1)}$,
$\G_n$ is a.a.s.\ temporally connected. Our techniques are not sufficiently strong to establish such results and we leave the problem of existence of a sharp threshold and the value of $\kappa_d$ for challenging future research.

Becker et al.~\cite{becker2023giant} introduce various definitions of temporal connectivity. In particular, for the case of random simple temporal graphs, they establish sharp thresholds for the probability of the existence of a temporal path between two typical vertices and for the probability of a typical vertex being a temporal source. It is an interesting question to study these probabilities, along with the probability of temporal connectivity. As it is apparent from our proofs, the critical radius for  all these quantities is of the same order of magnitude, but we have no further information about their relationship.

In Section~\ref{sec:new_upper_bound} we prove the upper bound of Theorem~\ref{thm:main_temp}, that is, we determine a threshold for when $\G_n$ is a.a.s.\ temporally disconnected. In Section~\ref{sec:lower_bound} we prove the lower bound of Theorem~\ref{thm:main_temp} by mapping the construction of a temporal path to a directed percolation model.

\section{Upper bound: $\G_n$ temporally disconnected}\label{sec:new_upper_bound}
 Let $\mathcal{G}_n = (G_n, (\tau_e)_{e \in E(G_n)})$ be a temporal random geometric graph with $G_n = (\mathcal{X}_n, K, r_n)$ satisfying assumptions [\ref{a1}] and [\ref{a2}]. In this section, we will prove that there exists  $c>0$ such that if $r_n\leq cn^{-1/(d+1)}$ for all $n$ sufficiently large, then a.a.s.\ $\G_n$
has a vertex that is not a temporal source (and in particular $\G_n$ is not temporally connected). In particular, we establish the following proposition. 
 \begin{prop}
     Suppose that $\G_n = (G_n, (\tau_e)_{e\in E(G_n)})$ is a temporal random geometric graph with $G_n = (\mathcal{X}_n, K, r_n)$ satisfying assumptions \textup{[\ref{a1}]} and \textup{[\ref{a2}]}. Then there exists $c > 0$ such that if $r_n \le cn^{-1/(d+1)}$ for all $n$ sufficiently large, then
 \begin{equation}\label{eq:upbd_one_to_all_temp_conn}
     \p{\exists u \in \X_n \text{ such that } \forall v \in \X_n,\, \tpathn{u}{v}} = o(1).
 \end{equation}
 \end{prop}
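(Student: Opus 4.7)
The plan is to apply a union bound over $u\in\X_n$: if we show that for any fixed $u$, the probability that $u$ is a temporal source is $o(1/n)$, then the bound follows by summation. Since the $n$ points are i.i.d.\ uniform on the torus $[0,1]^d$, concentration gives that every $u\in\X_n$ admits some $v\in\X_n$ with $\|u-v\|\geq 1/3$, failing with probability $o(1/n)$. Because a temporal source must in particular reach this $v$, it suffices to show
\[
\p{\tpathn{u}{v}} = o(1/n)
\]
uniformly over pairs $(u,v)$ with $\|u-v\|\geq 1/3$.

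For the first-moment step, let $N_L(u,v)$ be the number of paths of length $L$ from $u$ to $v$ in $G_n$. Conditionally on $G_n$ every fixed length-$L$ path is temporal with probability exactly $1/L!$, whence
\[
\p{\tpathn{u}{v}} \leq \sum_{L\geq 1}\frac{\E{N_L(u,v)}}{L!} \leq \sum_{L\geq 1}\frac{n^{L-1}}{L!}\,K_n^{*L}(u-v),
\]
where $K_n(y)\ceq K(\|y\|/r_n)$; the factor $n^{L-1}$ bounds the number of ordered choices of $L-1$ intermediate vertices, and $K_n^{*L}(u-v)$ (the $L$-fold convolution on the torus) arises from integrating out their uniform positions.

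Now split the sum at $L_0=\lfloor A/r_n\rfloor$ for a small constant $A>0$. For long paths $L>L_0$, use the crude bound $K_n^{*L}(u-v)\leq \|K_n\|_1^{L-1}\leq (Cr_n^d)^{L-1}$; combined with $r_n\leq cn^{-1/(d+1)}$ this gives terms of the form $(C'n^{1/(d+1)})^{L-1}/L!$. By Stirling these decay geometrically once $L\geq eC'n^{1/(d+1)}$, and choosing $c$ small enough that $L_0\geq (A/c)n^{1/(d+1)}\geq eC'n^{1/(d+1)}$ forces the entire long-tail to be $o(1/n)$. For short paths $L\leq L_0$ the displacement $\|u-v\|\geq 1/3$ is far larger than the typical $L$-step spread $Lr_n\leq A$; the super-exponential decay of $K$ at infinity in assumption \ref{a2} allows a Chernoff/large-deviations estimate on the density of the $L$-step walk with step density $K_n/\|K_n\|_1$ at $u-v$, yielding
\[
K_n^{*L}(u-v) \leq (Cr_n^d)^{L-1}\exp\!\bigl(-c(A)/r_n\bigr), \qquad c(A)\to\infty \text{ as } A\to 0.
\]
Summing against $n^{L-1}/L!$ produces an overall factor $\exp(Cnr_n^d - c(A)/r_n)$; since $nr_n^d \leq c^d n^{1/(d+1)}$ and $1/r_n \geq n^{1/(d+1)}/c$, choosing $A$ small (and then $c$ small relative to $A$) makes this super-polynomially small in $n$, hence $o(1/n)$.

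The main technical obstacle is the density estimate on $K_n^{*L}(u-v)$ in the short-$L$ regime. A direct Chernoff bound controls the \emph{tail probability} $\p{|S_L|\geq 1/3}$ of the associated random walk $S_L=X_1+\cdots+X_L$, whereas we need a pointwise bound on the \emph{density} at $u-v$. One bridges the gap via exponential tilting combined with a local central limit theorem for the tilted walk, or more elementarily by bounding the density at $y$ by $\p{S_L\in B(y,r_n)}/\mathrm{vol}(B(r_n))$ and applying Chernoff to this small-ball probability. The super-exponential tail $K(x)\leq \beta x^{-d}e^{-(x+1)\log(x+1)}$ in \ref{a2} makes every exponential moment of the step distribution finite, so an arbitrarily large tilt parameter is available, producing a rate exponent $c(A)/r_n$ with $c(A)\to\infty$ as $A\to 0$ and comfortably dominating every polynomial-in-$n$ factor. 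The remaining work is bookkeeping of constants to combine these pieces into the stated bound.
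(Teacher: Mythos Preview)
Your overall architecture—first-moment bound on temporal paths, split at $L_0\asymp 1/r_n$, handle long paths via $K_n^{*L}\le\|K_n\|_1^{L-1}$ plus Stirling, handle short paths via a large-deviation estimate on the random walk with step law $K_n/\|K_n\|_1$—is sound and matches the paper's strategy. The paper likewise decomposes into (i) a trivial ``all points clustered'' event, (ii) existence of a temporal path of length $\ge 1/(3r_n)$, and (iii) existence of a shorter temporal path spanning Euclidean distance at least a constant, and kills each by first moment.

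There is, however, a real gap in your reduction step. You union-bound over $u$ and then select ``some $v$ with $\|u-v\|\ge 1/3$'', aiming for $\p{\tpathn{u}{v}}=o(1/n)$ uniformly over deterministic far pairs. But that $v$ is a random function of $\X_n$, so you cannot invoke a bound that assumes the remaining $n-2$ points are i.i.d.\ uniform: once you condition on $\X_n$ to identify $v$, the intermediate points are fixed, not random. If instead you take $v$ to be a specific vertex (say vertex $2$), then with constant probability $\|X_1-X_2\|<1/3$ and the bound is vacuous—so the per-vertex estimate $o(1/n)$ does not follow. The repair is simple and simultaneously eliminates what you call the ``main technical obstacle'': do not fix $v$. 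Bound instead the expected number of temporal paths from $u$ to \emph{any} vertex at distance $\ge 1/3$. Summing over the endpoint replaces your pointwise density $K_n^{*L}(u-v)$ by the integral
\[
\int_{\|w\|\ge 1/3} K_n^{*L}(w)\,dw \;=\; \|K_n\|_1^{L}\,\p{\|S_L\|\ge 1/3},
\]
which is precisely the tail probability Chernoff controls directly—no density-to-tail conversion needed. (Your proposed inequality $p_L(y)\le \p{S_L\in B(y,r_n)}/\mathrm{vol}(B(r_n))$ is false in general: the density at a point need not be bounded by its local average, as already the triangular density $p_1*p_1$ for $p_1$ uniform on a ball shows.) Alternatively, and this is what the paper does, drop the initial union bound over $u$ altogether and bound directly the probability that \emph{some} long temporal path, or \emph{some} short far-spanning temporal path, exists anywhere in $\G_n$.

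The paper's treatment of the short-path regime differs from yours in flavour: rather than Chernoff on the walk, it first restricts to the event that no edge exceeds $r_n\log n$ (probability $1-o(1)$ by [\ref{a2}]), then for a path of $M<1/(3r_n)$ edges with total length $\ge 1/3$ it discretises the edge lengths as $\ell_i=\lceil d_i/r_n\rceil$, counts profiles with $\sum\ell_i=L$ by $\binom{L-1}{M-1}$, bounds the probability of each profile via the pointwise decay in [\ref{a2}], and closes with the entropy inequality $-\sum(\ell_i/L)\log(\ell_i/L)\le\log M$. Once your reduction is fixed, your Chernoff route is arguably more streamlined.
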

 \begin{proof} 
    By \cite[Theorem 2.3]{Penrose2013}, there is a constant $\gamma_d>0$ such that if $r_n < (\log n/(\gamma_dn))^{1/d}$ then $G_n$ is disconnected with high probability. Therefore, we may assume that for all $n$ sufficiently large, $r_n \geq (\log n/(\gamma_dn))^{1/d}$. 
 
The event of having a temporal source is contained in the union of the following three events: all points $\X_n$ are contained in a ball of radius at most $\sqrt{d}(1-r_n)/2$ {around some point in $\X_n$}, there exists a long temporal path of length at least $1/3r_n$, or there exists a short temporal path between two points at distance at least  $\sqrt{d}(1-r_n)/2$. More precisely,
\begin{equation}\label{eq:terms_to_bound}
\begin{aligned}
    &\p{\exists u\in \X_n \text{ such that } \forall v\in \X_n, \tpathn{u}{v}}\\
    &\le \p{\exists u\in \X_n \text{ such that } \forall v\in \X_n, \|u-v\| < \frac{\sqrt{d}}{2}(1-r_n)}\\
    &\quad + \p{\exists u,v \in \X_n \text{ such that } \tpathn{u}{v}, \ell(\tpathn{u}{v}) \ge {\frac{1}{3r_n}}}\\
    &\quad+ \p{\exists u,v \in \X_n \text{ such that } \|u - v\| \ge \frac{\sqrt{d}}{2}(1-r_n),  \tpathn{u}{v}, \ell(\tpathn{u}{v}) < {\frac{1}{3r_n}}}.
\end{aligned}
\end{equation}
Thus, it suffices to show that each of the terms on the right-hand side of  \eqref{eq:terms_to_bound} are $o(1).$

The first term is $o(1)$ since we assume that $r_n \geq (\log n/(\gamma_dn))^{1/d}$ and so, for some constant $\gamma_d'>0$,
\begin{align}\label{eq:first_term}
     \p{\exists u \in \X_n\text{ such that } \forall v \in \X_n,\, \|u - v\| < \frac{\sqrt{d}}{2}(1-r_n)}
    & \leq n\left(\text{Vol}\left(\B(0,\frac{\sqrt{d}}{2}(1 - r_n)\right)\right)^{n-1} 
    \nonumber\\
    &\leq n\gamma_d'^{\,n-1}\left(\sqrt{d}e^{-r_n}/2\right)^{d(n-1)}\nonumber\\
    &=o(1).
\end{align}

To bound the second term, we will apply the first moment method. To this end, without loss of generality assume that $1/{3}r_n\in \N$, let $k = 1/{3}r_n$ and let $N_n^{(k)}$ denote the number of temporal paths in $\G_n$ with at least $k-1$ edges. Note that by choosing $k$ ordered vertices, and ensuring that the edges between them exist and have monotone increasing labels, it follows that 
\begin{align*}
    \E{N_n^{(k)}} & \leq   \binom{n}{k}k! \frac{1}{{(k-1)!}} \left(\alpha \gamma_d' r_n^d + \int_{ [0,1]^d \setminus \B(0,r_{{n}})} K(\|y\|/r_n)dy\right)^{k-1}.
\end{align*}
By our conditions on $K$, the integral in the above inequality is at most a constant multiple (depending on $d$) of $r_n^d$. Together with the bound $\binom{n}{k}\le \left(\frac{en}{k}\right)^k$, it follows that there exists $\gamma_d''> 0$ such that,
\begin{align}
      \E{N_n^{(k)}} & \leq (\gamma_d'')^{k-1} \left(\frac{en}{k}\right)^k k r_n^{d(k-1)}\nonumber \\
       &= (\gamma_d'')^{k-1} \exp\left(k\log n + k - (k-1)\log k + d(k-1)\log(r_n)\right)  \nonumber\\ 
    & = (\gamma_d'')^{k-1} \exp\left(k\log n + k - (k-1)\log k - d(k-1)\log k \right) \nonumber\\
    &\le (\gamma_d'')^{k-1} \exp\left(k\log n + k\left(1-(d+1)\frac{k-1}{k}\log k \right)\right) \label{eq:upper_bound_ENk}.
\end{align}  
If for some
$c>0$
it holds that
$r_n \leq cn^{-1/(d+1)}$ for $n$ sufficiently large, we have that $\log k = \log (1/{3}r_n) \ge \log(1/{3}c) + \log(n)/(d+1)$ and so \eqref{eq:upper_bound_ENk} is at most 
   \begin{equation}
    (\gamma_d'')^{k-1}\exp\left( \log n + \left(1-(d+1)\log\left(\frac{1}{{3}c}\right)\right)k + (d+1)\log\left(\frac{1}{{3}c}\right)\right),\label{eq:Xk-expectation}
\end{equation}
which is $o(1)$ {for sufficiently small $c > 0$}. Then 
\begin{align}\label{eq:second_term}
    \p{\exists u,v \in \X_n \text{ such that }\tpathn{u}{v}, \ell(\tpathn{u}{v}) \ge \frac{1}{{3}r_n}} = o(1),
    \end{align}
    by the first moment method.

    From \eqref{eq:first_term} and \eqref{eq:second_term}, we have that the first two terms in \eqref{eq:terms_to_bound} are $o(1)$.
    Thus, it remains to prove that connectivity via short paths is unlikely, i.e., to show that
\begin{align}\label{eq:prob_path_1}
     \p{\exists u,v \in \X_n \text{ : } \|u - v\| \geq \frac{\sqrt{d}}{2}(1-r_n), \tpathn{u}{v},\ell( \tpathn{u}{v}) < \frac{1}{{3}r_n} } = o(1).
\end{align}
First, note that we can work under the event that for every edge $uv \in E(G_n)$, $\normd{u-v} \leq r_n \log n$, since the probability that there exists an edge of length greater than $r_n \log n $ is, by a union bound, at most
\[
n^2 \gamma_d \int_{\log n}^\infty K(x) x^{d-1} dx = O\left( n^2 e^{-\log n \log\log n}\right) = o(1).
\]
For sufficiently large $n$, we have $\sqrt{d} (1-r_n) /2 \geq 1/3$ so we can then bound the left-hand side of \eqref{eq:prob_path_1} from above by the probability that there exists a temporal path $\tpathn{u}{v}$ with {$M \in ((3r_n \log n)^{-1}, (3r_n)^{-1})$} edges, such that the sum of its edge lengths is at least $1/3$.

We now compute the expected number of paths in $G_n$ with $M$ edges and such that the sum of the edge lengths is at least $1/3$. Consider such a path, and write its edge lengths $d_1, \dots, d_M$ with $\sum_{i=1}^M d_i \geq 1/3$. 
These edge lengths correspond to a vector $(\ell_i)_{i\in[M]} \in \N^{\geq 1}$ where $\ell_i \ceq \lceil d_i r_n^{-1} \rceil$, such that $L \ceq \sum_{i=1}^M \ell_i$ satisfies $L \geq r_n^{-1}/3$. For a given $L$, the number of such vectors is bounded from above by 
\[\binom{L-1}{M-1} = O\left( \bpar{\frac{eL}{M}}^M \right) = O\left( e^{M(\log(L/M) + 1)}\right).\]
Fixing $(\ell_i)_{i\in M}$, the probability that a path with $M$ edges and edge lengths $d_1,\ldots,d_M$, satisfying $(\ell_i - 1)r_n \leq d_i \leq \ell_i r_n$, exist can be bounded above by 
\begin{align*}
    \prod_{i=1}^M  \gamma_d \int_{(\ell_i-1) r_n}^{\ell_ir_n}  K(\xi/r_n)\xi^{d-1} d\xi 
    &= (\gamma_d r_n^d)^M \prod_{i=1}^M \Big( \int_{\ell_i-1}^{\ell_i} K(x)x^{d-1} dx \Big) \\ 
    &= O\left( (\gamma_d r_n^d)^M \prod_{i=1}^M e^{-\ell_i \log(\ell_i)}\right) \\ 
    &= O\left((\gamma_d r_n^d)^M \exp\left( -L \sum_{i=1}^M \frac{\ell_i}{L} \log(\ell_i /L) - \sum_{i=1}^M \ell_i\log(L) \right)\right) \\  
    &= O\left((\gamma_d r_n^d)^M \exp(-L \log (L/M))\right),
\end{align*}
where in the last equality we use the fact that $- \sum_{i=1}^M\tfrac{\ell_i}{L}\log(\ell_i/L) \leq \log M$. Therefore the expected number of paths in $G_n$ with $M$ edges and such that the sum of its edge lengths is at least $1/3$, is 
\[O\left((\gamma_dr_n^d)^Me^{M+M\log(L/M) - L\log(L/M)}\right).\]
Applying the first moment method we obtain that the probability that there exists a temporal path $\tpathn{u}{v}$ comprised of $M$ edges, such that the sum of the edge lengths is $1/3$, is 
\[O\left({n \choose M+1}(M+1)!\frac{1}{M!}(\gamma_dr_n^d)^Me^{M+M\log(L/M) - L\log(L/M)}\right),\]
where we have used the fact that such paths contain $M+1$ ordered vertices between which the edges exist and are ordered with monotone increasing edge labels. 
Since $L \ge r_n^{-1}/3 > M$, this is equal to 
\[O\left({n\choose M+1}(M+1)r_n^{dM}e^M\right),\]
which is $o(1)$ when $M \in ((r_n\log n)^{-1}, r_n^{-1}/3)$ by the same computation as \eqref{eq:upper_bound_ENk}. 
\end{proof}

\section{Lower bound: $\G_n$ temporally connected}\label{sec:lower_bound}

 In this section we show that there exists a constant $C_d > 0$ such that if $r_n \ge C_d n^{-1/(d+1)}$ for all $n$ sufficiently large, then $\G_n$ is a.a.s\ temporally connected. 
 
 By monotonicity, to prove Theorem~\ref{thm:main_temp} \ref{thm:main_lower}, it suffices to prove the statement for a hard random geometric graph where each edge is retained with probability $\alpha\in (0,1]$. That is for $\G_n = (G_n, (\tau_e)_{e\in E(G_n)})$, with $G_n = (\X_n, K,r_n)$ such that $K(x) := \alpha \ind{x\leq 1}$. More specifically, we prove the following proposition.

 \begin{prop}\label{prop:lower_bound}
     Fix $\alpha\in (0,1]$. For all $n \ge 1$ let $\G_n = (G_n, (\tau_e)_{e\in E(G_n)})$ be a temporal random geometric graph with $G_n = (\X_n, K, r_n)$ such that $K(x) = \alpha\ind{x\le 1}$ for all $x \ge 0$. Then there exists $C_d > 0$ such that if for all $n$  sufficiently large, $r_n \ge C_dn^{-1/(d+1)}$, then
     \[\p{\forall u,v\in \X_n,~ \tpathn{u}{v}} = 1-o(1).\]
 \end{prop}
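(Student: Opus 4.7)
The strategy is to couple the construction of temporal paths with a supercritical oriented percolation on a space-time grid, leveraging the fact that, under the hypothesis $r_n \ge C_d n^{-1/(d+1)}$, the product $n r_n^{d+1} \ge C_d^{d+1}$ can be made as large as desired by choosing $C_d$ large. I would tile the torus $[0,1]^d$ with axis-aligned cubes of side $s := r_n/(3\sqrt{d})$, so that any two points lying in cubes at $\ell^\infty$-grid distance at most $1$ are within Euclidean distance $r_n$, and partition $[0,1]$ into $M = \lceil 1/s \rceil = \Theta(n^{1/(d+1)})$ equal time windows $T_1, \ldots, T_M$. Each cube contains $\Theta(n r_n^d) = \Theta(n^{1/(d+1)}) \to \infty$ vertices in expectation, and for every ordered pair of adjacent cubes the expected number of edges bearing a time stamp in a fixed window is $\alpha (n s^d)^2 / M = \Theta(n^{1/(d+1)})$.

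The next step is to isolate a ``good event'' $\mathcal{E}$ of probability $1 - o(1)$, on which every cube contains $\Theta(n r_n^d)$ vertices and every triple (ordered adjacent cube pair, time window) carries $\Theta(n^{1/(d+1)})$ edges of the required form. Both assertions are Chernoff estimates whose means diverge like $n^{1/(d+1)}$, so deviation probabilities are $\exp(-\Omega(n^{1/(d+1)}))$, easily absorbing the union bound over the $\mathrm{poly}(n)$ such events. On $\mathcal{E}$, for any fixed pair $u \in C_0$ and $v \in C_L$, I would select a sequence of $\ell^\infty$-adjacent cubes $C_0, C_1, \ldots, C_L$ connecting them (possible with $L \le M/2$, since the torus has $\ell^\infty$-diameter $1/2$ and each cube has side $s \asymp 1/M$), and inductively track the set $A_k \subseteq C_k$ of vertices reachable from $u$ by a temporal path whose $j$-th edge runs from $A_{j-1}$ into $C_j$ and carries time stamp in $T_j$. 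Given $|A_{k-1}|$, each $w \in C_k$ lies in $A_k$ independently with probability $1 - (1 - \alpha/M)^{|A_{k-1}|}$, so as soon as $|A_{k-1}| = \Theta(|C_{k-1}|)$, Chernoff yields $|A_k| = \Theta(|C_k|)$ with failure probability $\exp(-\Omega(n^{1/(d+1)}))$.

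The principal obstacle arises because starting from $|A_0| = 1$ the expected per-window branching factor is only $\Theta(C_d^{d+1})$, a constant. For this initial burst, I would couple $(|A_k|)$ to a supercritical Galton--Watson tree with mean $\lambda \ge 2$ (achievable by taking $C_d$ large), and use the first $\Theta(\log n)$ windows to seed multiple independent root successors of $u$ in nearby cubes: the expected number of direct successors of $u$ over $\Theta(\log n)$ windows is $\Theta(C_d^{d+1}\log n) \gg \log n$, so $\Omega(\log n)$ seeds arise except with probability $o(n^{-2})$, which drives the Galton--Watson extinction probability down to $\rho^{\Omega(\log n)} = o(n^{-2})$. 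A further $O(\log_\lambda n)$ windows suffice for $|A_k|$ to saturate, after which cube-to-cube propagation proceeds as above. The last edge, which must land on $v$ specifically rather than merely in $C_L$, is handled using the slack $L \le M/2$: the $\Theta(n^{1/(d+1)})$ vertices in $A_{L-1}$ are each within Euclidean distance $r_n$ of $v$, and each independently provides a usable final edge with probability at least $\alpha/2$ (edge existence $\alpha$ times probability $\ge 1/2$ that its uniform stamp lies in the remaining budget $[(L-1)/M, 1]$), so the failure probability at this step is $\exp(-\Omega(n^{1/(d+1)}))$. Combining all per-pair failure probabilities with a union bound over the $\binom{n}{2}$ pairs completes the argument.
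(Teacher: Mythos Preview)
Your approach differs substantially from the paper's. Rather than following a single chain of cubes, the paper runs oriented percolation on the full grid of boxes (side $\Theta(r_n)$), declaring the directed edge $\square\to\square'$ open when every vertex of $\square$ has a neighbour in $\square'$ whose time-stamp lies in a prescribed interval $T_m$ depending only on the diagonal index $m$. The intervals are \emph{not} of uniform width: near the two endpoints (regions $R_1,R_3$, each of depth $n^\varepsilon$ boxes) one takes $|T_m|=n^{-\varepsilon}$, so those percolation edges are open with probability $1-\exp(-\Omega(n^{1/(d+1)-\varepsilon}))$ and the cluster of the source box deterministically fans out to cover the entire anti-diagonal at depth $n^\varepsilon/4$; in the bulk region $R_2$ one takes $|T_m|=\Theta(n^{-1/(d+1)})$, giving a constant edge-probability close to $1$, and a Peierls contour-counting argument shows the percolation crosses. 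Your scheme instead stays on one geodesic of cubes and manufactures the initial fan-out \emph{temporally}, by spending $\Theta(\log n)$ windows to seed $\Omega(\log n)$ successors of $u$ in $C_1$. Your route avoids the contour argument; the paper's route buys a uniform treatment of all pairs and bypasses any branching-process analysis.

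Two places in your sketch need repair. First, the Galton--Watson coupling runs the wrong way for your purpose: children of distinct parents in $A_{k-1}$ can coincide as vertices of $C_k$, so $|A_k|$ is \emph{dominated by} the GW population, not the reverse; moreover the $\Omega(\log n)$ seed-processes are not independent (they propagate through the same boxes and edges), so the bound $\rho^{\Omega(\log n)}$ on extinction is not immediate. The clean fix is to argue directly: given $|A_{k-1}|=a$ one has $|A_k|\sim\mathrm{Bin}(N,1-(1-\alpha/M)^a)$ with mean at least $\lambda a/2$, and for $\lambda$ large enough Chernoff gives $|A_k|\ge 2a$ except with probability $e^{-\Omega(a)}$; summing over geometrically growing $a$ from $a_1=\Theta(\log n)$ yields total failure $o(n^{-2})$. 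Second, you implicitly assume $L-1$ exceeds the $O(\log n)$ steps needed for saturation before invoking $|A_{L-1}|=\Theta(N)$; pairs with $L=O(\log n)$ need a separate one-line argument (e.g.\ two-step paths $u\to w\to v$ through the $\Theta(N)$ common neighbours $w$, each succeeding with probability $\alpha^2/2$). Neither gap is fatal to the overall strategy.
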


 We present the proof in the specific case where $d = 2$, and discuss the extension to general dimensions in subsection \ref{sec:dim}. In this case, we henceforth simplify notation and let $C = C_2 > 0.$  We also assume that for all $n$  sufficiently large, $r_n = Cn^{-1/3}$ (since the case $r_n \ge Cn^{-1/3}$ follows by monotonicity).

 To prove Proposition \ref{prop:lower_bound}, we construct a temporal path between any two vertices $u,v\in \X_n$. First, we present the construction for the two furthest possible separated points on the torus $[0,1]^2$, namely $u = (0,0)$ and $v = (1/2,1/2).$ An analogous construction then holds for any pair of points, and the result follows by a union bound over all $u,v \in \X_n$.

To construct a temporal path from $(0,0)$ to $(1/2,1/2)$, we split $[0,1/2]^2$ into three regions, see Figure \ref{fig:regions} for an illustration. Fix $\varepsilon \in (0, 1/3)$. In the first region (the red region in Figure \ref{fig:regions}), we construct many monotone increasing paths from $(0,0)$ and ending at a point close to the diagonal $x+y = (4\sqrt{2})^{-1}n^\varepsilon r_n$. The monotone paths in this region are such that all edges have labels that are at most $1/4$, and the label of the $m$-th edge in each path falls in an interval $T_m \subset [0,1]$ with  $|T_m| \approx n^{-\varepsilon}$. The size of each interval $T_m$  is generous, and this generosity is such that if continued for too long, we would run out of admissible labels before reaching $(1/2,1/2)$. Therefore, in the second region, (the white region in Figure \ref{fig:regions}), we build onto the paths constructed in the first region, if possible, along edges whose labels are more restricted, that is, ranging from $1/4$ to $3/4$, and falling in intervals of length $\approx n^{-1/3}$. This construction is such that with high probability, at least one of the paths from the first region can be extended to have an endpoint close to the diagonal $x+y = 1 - (4\sqrt{2})^{-1} r_n(1 + n^\varepsilon/4).$ In the third region, (the yellow region in Figure \ref{fig:regions}), we proceed similarly to in the first region, being generous once again with the edge labels. By a symmetric argument, the path which reached the diagonal $x+y = 1 - (4\sqrt{2})^{-1}r_n(1 + n^\varepsilon/4)$ can be extended to a path with endpoint at the point $(1/2,1/2)$. 
Since the number of vertices {of} $\X_n$ lying in a given region of $[0,1]^2$ with area $((4\sqrt{2})^{-1}r_n)^2$ is concentrated around its mean, $N = (C^2/32)n^{1/3}$, to formalize this argument, it is convenient to frame the construction in terms of an inhomogeneous directed percolation model on the dual grid lattice where the directed edges in the percolation model are open with probabilities given by (\ref{def:perc_prob}).

    \subsection{From temporal graph to directed percolation on the dual grid lattice}\label{sec:directed_percolation}  Without loss of generality, assume that $((4\sqrt{2})^{-1}r_n)^{-1}, (2(4\sqrt{2})^{-1}r_n)^{-1} \in \N$. Partition $[0,1/2]^2$ into boxes of side-length $\ell_n = (4\sqrt{2})^{-1}r_n.$ Let $\bnum = (2\ell_n)^{-1}-1$ and for $i,j\in [\bnum]_0{\ceq\{0,1,\dots,\bnum\}}$, let 
    \begin{align}\label{def:boxes}
        \square_{i,j} := [i\ell_n, (i+1)\ell_n]\times [j\ell_n, (j+1)\ell_n],
    \end{align}
 so that by our choice for $\ell_n$, for $v\in \X_n\cap \square_{i,j}$ and $u\in \X_n\cap \square_{i',j'}$ with \[(i',j')\in \{(i+1,j), (i-1,j), (i,j+1), (i, j-1)\}\] it holds that $\p{(u,v)\in E(G_n)}=\alpha$.

In this section, we map the construction of a temporal path between the points $(0,0)$ and $(1/2,1/2)$ to a directed percolation on the dual grid lattice $\mathbb{L}_n = (\mathbb{V}_n,\mathbb{E}_n)$ with vertex set  
\[\mathbb{V}_n = \{\square_{i,j}~:~i,j\in [\bnum]_0\},\]
and directed edge set $\mathbb{E}_n$ being the union of the sets 

\[  \left\{(\square_{\bnum,j}, \square_{\bnum, j+1})~:~j\in [\bnum - 1]_0\right\},\] \[\left\{(\square_{i,\bnum}, \square_{i+1, \bnum})~:~i\in [\bnum-1]_0\right\},\]
and
\[\left\{(\square_{i,j}, \square_{i+1,j}), (\square_{i,j}, \square_{i,j+1})~:~ i,j\in [\bnum-1]_0\right\}. \]
See Figure \ref{fig:grid_lattice} for an illustration of $\mathbb{L}_n$.

\begin{SCfigure}[1][hbtp]
    \centering
    \tikzset{every picture/.style={line width=0.75pt}} 

\begin{tikzpicture}[x=0.75pt,y=0.75pt,yscale=-1,xscale=1]
    \def\num{10}
    
    \foreach \x in {40,60,...,120} {
        \draw[gray, opacity=0.5, -] (\x,20) -- (\x,120);
        \foreach \z in {30, 50,70,90} {
        \draw[color={rgb, 255:red, 74; green, 144; blue, 226 }, <-] (\x + 10, \z + 3) -- (\x+10, \z+17);
        \node at (\x + 10,\z + 20)[circle,fill,inner sep=1.5pt,color={rgb, 255:red, 74; green, 144; blue, 226 }]{};
        }
        \node at (\x + 10,10 + 20)[color={rgb, 255:red, 74; green, 144; blue, 226 },circle,fill,inner sep=1.5pt]{};

    }
    
    \foreach \y in {20,40,...,100} {
        \draw[gray, opacity=0.5] (40,\y) -- (140,\y);
        \foreach \z in {50,70,..., 110} {
        \draw[color={rgb, 255:red, 74; green, 144; blue, 226 }, ->] (\z+3, \y + 10) -- (\z + 17, \y+10);
        }
    }

    \fill[gray, opacity = 0.4] (100,100) rectangle (120,120);
   
    \draw[black, opacity=0.5] (40,20) -- (140,20) -- (140,120) -- (40,120) -- cycle;

\draw (39.83,127.85) node  [font=\small,color={rgb, 255:red, 0; green, 0; blue, 0 }  ,opacity=1 ] [align=left] {\begin{minipage}[lt]{24.25pt}\setlength\topsep{0pt}
$\displaystyle ( 0,0)$
\end{minipage}};
\draw (140.17,10.85) node  [font=\small,color={rgb, 255:red, 0; green, 0; blue, 0 }  ,opacity=1 ] [align=left] {\begin{minipage}[lt]{25.61pt}\setlength\topsep{0pt}
$\displaystyle ( 1/2,1/2)$
\end{minipage}};
\end{tikzpicture}
    \caption{Directed grid lattice with $\ell_n = 1/10$ with vertices and directed edges in blue. The filled square highlights vertex $\square_{0,3}$. \\
    }
    \label{fig:grid_lattice}
\end{SCfigure}
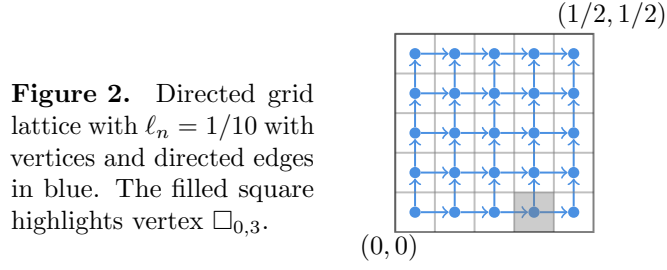
The percolation on the probability space $(\Omega, \mathbb{P}, \F)$, has configuration space $\Omega = \{0,1\}^{\mathbb{E}_n}$ where for $\omega\in \Omega$, and $\vec{e}\in \mathbb{E}_n$, $\omega(\vec{e}) = 1$ denotes the directed edge $\vec{e}$ being open. For any $\omega\in \Omega$, we say that the vertex $\square_{i,j}\in \mathbb{V}_n$ is connected to vertex $\square_{k,l}$, denoted $\square_{i,j}\rightarrow \square_{k,l}$, if there exists an open path in $\mathbb{E}_n$ from $\square_{i,j}$ to $\square_{k,l}$. That is, if there exists a sequence of open directed edges  $\vec{e}_0 = (\square_{i,j}, \square_{i_0,j_0})$, $\vec{e}_1 = ( \square_{i_0,j_0}, \square_{i_1,j_1}),\dots,$ $\vec{e}_m = ( \square_{i_m-1,j_m-1}, \square_{k,l})\in \mathbb{E}_n$. 

Informally, the idea of the proof of Proposition \ref{prop:lower_bound} is to map the construction of a temporal random geometric graph $\G_n = (G_n, (\tau_e)_{e\in E(G_n)})$ to a directed percolation of $\mathbb{L}_n$ such that an edge $(\square_{i,j}, \square_{i',j'})\in \mathbb{E}_n$ is open if in $\G_n$, for all vertices of $u\in \X_n\cap\square_{i,j}$ there exists a vertex $u\in\X_n\cap\square_{i',j'}$ such that the time-stamp $\tau_{uv}$ falls in a certain range $T_{i+j} \subset [0,1]$. Fix $\varepsilon \in (0,1/3)$. We define the ranges $(T_{i+j})_{i,j}$ as follows. For $c,d \in \mathbb{R}$, $(a,b]\subset \mathbb{R}$, we write $d+c(a,b]$ for $(d+ca, d+cb]$. Then for all $m \in [\bnum]_0$, let 
\begin{equation}\label{eq:ranges}T_{m} := \begin{cases}
    \left(\frac{m}{n^\varepsilon}, \frac{m+1}{n^{\varepsilon}}\right] & m\in R_1,\\
    \frac{1}{4} - \frac{(n^\varepsilon - 2)}{8\bnum - n^\varepsilon} + \frac{4(n^\varepsilon - 2)n^{1/3 - \varepsilon}}{8\bnum - n^\varepsilon} \left(\frac{m}{n^{1/3}}, \frac{m+1}{n^{1/3}}\right] & m \in R_2,\\
    1 - \frac{2\bnum}{n^{\e}} + \left(\frac{m}{n^\varepsilon}, \frac{m+1}{n^\varepsilon}\right] &m \in R_3,
\end{cases}\end{equation}
where $R_1 = [0, n^{\e}/4),\,  R_2 = [n^{\e}/4, 2\bnum -n^{\e}/4) $ and $R_3 = [2\bnum -n^{\e}/4, 2\bnum]$. See Figure \ref{fig:regions} for an illustration.

\begin{SCfigure}[.6][hbtp]
\scalebox{0.75}{
\tikzset{every picture/.style={line width=0.75pt}} 

\begin{tikzpicture}[x=0.75pt,y=0.75pt,yscale=-1,xscale=1]

\draw  [draw opacity=0] (40,20) -- (340,20) -- (340,320) -- (40,320) -- cycle ; \draw  [color={rgb, 255:red, 155; green, 155; blue, 155 }  ,draw opacity=0.5 ] (40,20) -- (40,320)(60,20) -- (60,320)(80,20) -- (80,320)(100,20) -- (100,320)(120,20) -- (120,320)(140,20) -- (140,320)(160,20) -- (160,320)(180,20) -- (180,320)(200,20) -- (200,320)(220,20) -- (220,320)(240,20) -- (240,320)(260,20) -- (260,320)(280,20) -- (280,320)(300,20) -- (300,320)(320,20) -- (320,320) ; \draw  [color={rgb, 255:red, 155; green, 155; blue, 155 }  ,draw opacity=0.5 ] (40,20) -- (340,20)(40,40) -- (340,40)(40,60) -- (340,60)(40,80) -- (340,80)(40,100) -- (340,100)(40,120) -- (340,120)(40,140) -- (340,140)(40,160) -- (340,160)(40,180) -- (340,180)(40,200) -- (340,200)(40,220) -- (340,220)(40,240) -- (340,240)(40,260) -- (340,260)(40,280) -- (340,280)(40,300) -- (340,300) ; \draw  [color={rgb, 255:red, 155; green, 155; blue, 155 }  ,draw opacity=0.5 ]  ;
\draw   (40,20) -- (340,20) -- (340,320) -- (40,320) -- cycle ;

\fill[color=red, opacity = 0.4] (40,220) rectangle (60,320) ;
\fill[color=red, opacity = 0.4] (60,240) rectangle (80,320) ;
\fill[color=White, opacity = 0.4] (60,240) rectangle (80,20);
\fill[color=White, opacity = 0.4] (40,220) rectangle (60,20);
\fill[color=White, opacity = 0.4] (80,260) rectangle (100,20);
\fill[color=White, opacity = 0.4] (100,280) rectangle (120,20);
\fill[color=White, opacity = 0.4] (120,300) rectangle (140,20);
\fill[color=White, opacity = 0.4] (140,320) rectangle (160,20);
\fill[color=White, opacity = 0.4] (160,320) rectangle (220,20);
\fill[color=White, opacity = 0.4] (220,320) rectangle (240,40);
\fill[color=White, opacity = 0.4] (240,320) rectangle (260,60);
\fill[color=White, opacity = 0.4] (260,320) rectangle (280,80);
\fill[color=White, opacity = 0.4] (280,320) rectangle (300,100);
\fill[color=White, opacity = 0.4] (300,320) rectangle (320,120);
\fill[color=White, opacity = 0.4] (320,320) rectangle (340,140);
\fill[color=yellow, opacity = 0.4] (320,20) rectangle (340,140);
\fill[color=yellow, opacity = 0.4] (300,20) rectangle (320,120);
\fill[color=yellow, opacity = 0.4] (280,20) rectangle (300,100);
\fill[color=yellow, opacity = 0.4] (260,20) rectangle (280,80);
\fill[color=yellow, opacity = 0.4] (240,20) rectangle (260,60);
\fill[color=yellow, opacity = 0.4] (220,20) rectangle (240,40);
\fill[color=red, opacity = 0.4] (80,260) rectangle (100,320) ;
\fill[color=red, opacity = 0.4] (100,280) rectangle (120,320) ;
\fill[color=red, opacity = 0.4] (120,300) rectangle (140,320) ;
\fill[color=red, opacity = 0.4] (140,320) rectangle (160,320) ;

\draw (90,300) node  [font=\small,color={rgb, 255:red, 0; green, 0; blue, 0 }  ,opacity=1 ] {\begin{minipage}[lt]{50pt}\setlength\topsep{0pt}
$i+j\in R_1$
\end{minipage}};
\draw (200,170) node  [font=\small,color={rgb, 255:red, 0; green, 0; blue, 0 }  ,opacity=1 ] {\begin{minipage}[lt]{50pt}\setlength\topsep{0pt}
$i+j\in R_2$
\end{minipage}};

\draw (300,40) node  [font=\small,color={rgb, 255:red, 0; green, 0; blue, 0 }  ,opacity=1 ] {\begin{minipage}[lt]{50pt}\setlength\topsep{0pt}
$i+j\in R_3$
\end{minipage}};
\draw (39.83,327.85) node  [font=\small,color={rgb, 255:red, 0; green, 0; blue, 0 }  ,opacity=1 ] [align=left] {\begin{minipage}[lt]{24.25pt}\setlength\topsep{0pt}
$\displaystyle ( 0,0)$
\end{minipage}};
\draw (340.17,10.85) node  [font=\small,color={rgb, 255:red, 0; green, 0; blue, 0 }  ,opacity=1 ] [align=left] {\begin{minipage}[lt]{25.61pt}\setlength\topsep{0pt}
$\displaystyle ( 1/2,1/2)$
\end{minipage}};

\draw (125,329) node  [font=\small,color={rgb, 255:red, 0; green, 0; blue, 0 }  ,opacity=1 ] [align=left] {\begin{minipage}[lt]{24.25pt}\setlength\topsep{0pt}
$\displaystyle (n^\varepsilon \ell_n/4,0)$
\end{minipage}};
\draw (250,10) node  [font=\small,color={rgb, 255:red, 0; green, 0; blue, 0 }  ,opacity=1 ] [align=right] {\begin{minipage}[lt]{140pt}\setlength\topsep{0pt}
$\displaystyle (1/2 - \ell_n(1+n^\varepsilon/4), 1/2)$
\end{minipage}};

\foreach \x in {40,60,80,100,120,140,160,180,200,220,240,260,280,300,320}{
\foreach \y in {20,40,60,80,100,120,140,160,180,200,220,240,260,280,300}
\fill[gray, opacity = 0.2] (\x + 10, \y+10) circle (2pt);
}
\end{tikzpicture}
}
\caption{The intervals $R_1, R_2, R_3$ should be thought of as indicating ``regions" in the grid in the sense that if, for example, $i+j\in R_1$, then $\square_{i,j}$ is a vertex in the portion of the grid which is highlighted in red above (and similarly $\square_{i,j}$ such that $i+j\in R_2$ is in white, and $\square_{i,j}$ for $i+j\in R_3$ in yellow).\\
} \label{fig:regions}
\end{SCfigure}
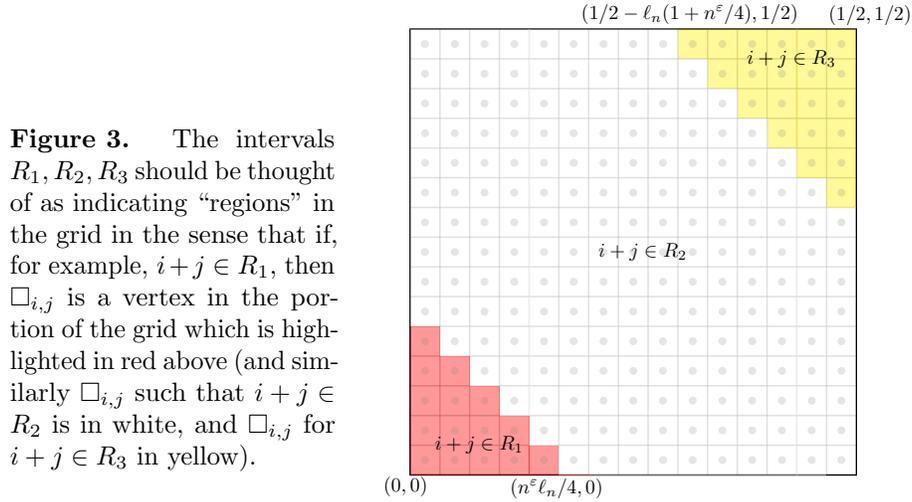

\begin{rem}\label{rem:sizes}
    The sizes of the regions in (\ref{eq:ranges}) are chosen specifically to work well with the number of edges in $\G_n$ that have one endpoint in $\square_{i,j}$ and another in $\square_{i+1,j}\cup \square_{i,j+1}$. More specifically, given $\X_n$, the number of edges between a fixed vertex in $\X_n\cap \square_{i,j}$ and vertices in $\X_n\cap \square_{i+1,j}$ with time-stamp in $T_{i,j}$ has distribution $\mathrm{Binomial}(|\X_n \cap \square_{i+1,j}|, |T_{i+j}|).$   Since $|\X_n \cap \square_{i+1,j}| \eqdist \mathrm{Binomial}(n,\mathrm{vol}(\square_{i+1,j}))$, the number of points in $\X_n\cap\square_{i+1,j}$ is highly concentrated around its mean, $n\cdot\mathrm{vol}(\square_{i+1,j})$. This implies that if $i+j\in R_2$, then a constant proportion of the edges between a fixed vertex in $\X_n\cap \square_{i,j}$ and vertices in $\X_n\cap (\square_{i+1,j}\cup \square_{i,j+1})$ have time-stamp in $T_{i+j}$. On the other hand, if $i+j\in R_1 \cup R_3$, then the proportion of such edges with time-stamps in $T_{i+j}$ is of the order of $n^{1/3 - \varepsilon}$. 
\end{rem}

Let $(B_{\vec{e},k})_{\vec{e}\in \mathbb{E}_n, k \in[(1+t)N]}$ be independent random variables satisfying 
\begin{align}
        B_{\vec{e}, k}  \stackrel{d}{=} \text{Binomial}((1-t)N, \alpha|T_{i+j}|).
\end{align}
We consider the directed percolation model with edges $\vec{e} = (\square_{i,j}, \square_{i',j'})\in \mathbb{E}_n$ open independently with probability $p_{\vec{e}}$, where
\begin{align}\label{def:perc_prob}
    p_{\vec{e}} := \begin{cases}
    \p{B_{\vec{e},1} \geq 1 }^{(1+t)N} & \text{ if } i+ j \in R_1 \cup R_3,   \\
         \p{B_{\vec{e},1} \geq 1 } & \text{ if } i+j \in R_2.
    \end{cases}
\end{align}
{We will denote the law of the directed percolation model by $\mathbb{P}$.}
\begin{lem}\label{lem:perc_to_diag}
    Let $\e \in (0,1/3)$ with $n^{\e}/4\in \mathbb{N}$. The probability of having an open path from $\square_{0,0}$ to every box $\square_{i,j}$ with $i+j = n^{\e}/4$ in our directed percolation model is bounded from below by
    \begin{align*}
        \P\left\{\bigcap_{i+j = \frac{n^\varepsilon}{4}}\left\{\square_{0,0} \to \square_{i,j}\right\} \right\} \geq 1 - \exp\left(-\Omega(n^{1/3 -\e})\right).
    \end{align*}
\end{lem}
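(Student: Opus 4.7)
The plan is to give a blunt union bound over the edges of the sub-lattice $\Delta := \{\square_{i,j} : i,j \ge 0,\ i + j < n^\varepsilon/4\}$, exploiting the fact that the edge probabilities $p_{\vec{e}}$ are exponentially close to~$1$ on the region $R_1$. On the event that every directed edge with source in $\Delta$ is open, each target vertex $\square_{i,j}$ with $i+j = n^\varepsilon/4$ is reachable from $\square_{0,0}$ via the monotone lattice path taking $i$ east-steps followed by $j$ north-steps (whose $i+j$ sources all lie in $\Delta$), so it will suffice to control the probability that some such edge is closed.

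First I would lower bound $p_{\vec{e}}$ for edges with source in $\Delta$. Since $|T_m| = n^{-\varepsilon}$ on $R_1$ and $N = (C^2/32)n^{1/3}$, one has $\mathbb{E}[B_{\vec{e},1}] = (1-t)N\alpha n^{-\varepsilon} = \Theta(n^{1/3-\varepsilon})$, and the inequality $1-x \le e^{-x}$ yields
\[\mathbb{P}\{B_{\vec{e},1} = 0\} = (1 - \alpha n^{-\varepsilon})^{(1-t)N} \le \exp\bigl(-(1-t)N\alpha n^{-\varepsilon}\bigr) = e^{-\Omega(n^{1/3-\varepsilon})}.\]
Bernoulli's inequality then gives
\[p_{\vec{e}} = \bigl(1-\mathbb{P}\{B_{\vec{e},1}=0\}\bigr)^{(1+t)N}\ge 1 - (1+t)N\cdot e^{-\Omega(n^{1/3-\varepsilon})} = 1 - e^{-\Omega(n^{1/3-\varepsilon})},\]
since the polynomial prefactor $(1+t)N = O(n^{1/3})$ is absorbed by the exponent as soon as $\varepsilon < 1/3$.

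The second step is the union bound. The region $\Delta$ contains $\Theta(n^{2\varepsilon})$ lattice vertices, each of which has at most two outgoing edges in $\mathbb{L}_n$; hence there are $O(n^{2\varepsilon})$ edges with source in $\Delta$. By independence and the bound above, the probability that any of them is closed is at most
\[O(n^{2\varepsilon})\cdot e^{-\Omega(n^{1/3-\varepsilon})} = e^{-\Omega(n^{1/3-\varepsilon})},\]
and on the complement all of the monotone paths described above are open simultaneously, yielding the claimed bound.

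The only delicate point I would expect is verifying that the large exponent $(1+t)N = \Theta(n^{1/3})$ in the definition of $p_{\vec{e}}$ does not swamp the doubly-exponential smallness of $\mathbb{P}\{B_{\vec{e},1}=0\}$; this forces the hypothesis $\varepsilon < 1/3$, which is exactly the regime of the lemma. Aside from that, everything reduces to two elementary estimates and a single union bound.
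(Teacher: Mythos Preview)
Your argument is correct and follows essentially the same two-step strategy as the paper: first show that every edge $\vec{e}$ with source in the triangle $\Delta$ has $p_{\vec{e}} = 1 - e^{-\Omega(n^{1/3-\varepsilon})}$, then absorb a polynomial-in-$n^{\varepsilon}$ number of such edges into the exponent. The only difference is in the second step: the paper invokes the FKG inequality to lower bound the intersection by the product $\prod_{i+j=n^\varepsilon/4}\mathbb{P}\{\square_{0,0}\to\square_{i,j}\}$ and then bounds each factor by the probability that a single monotone path is open, whereas you bypass FKG entirely by passing to the stronger event ``every edge with source in $\Delta$ is open'' and applying a plain union bound. Both routes produce the same $O(n^{2\varepsilon})\cdot e^{-\Omega(n^{1/3-\varepsilon})}$ error term; your version is marginally more elementary since it avoids the correlation inequality, while the paper's FKG step would generalise more readily if one only had lower bounds on the marginal connection probabilities rather than on the individual edge probabilities.
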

\begin{proof}
    For all $\vec{e}\in \mathbb{E}_n$ such that $\vec{e} = (\square_{i,j}, \square_{i',j'})$ with $i+j < n^\varepsilon/4$, we have that $|T_{i+j}|=n^{-\e}$ and so $(B_{\vec{e},k})_{k\in [(1+t)N]}$ are i.i.d.\ random variables with distribution $\mathrm{Binomial}((1-t)N, \alpha n^{-\varepsilon}).$
    By Chernoff's inequality, see e.g.\,\cite{janson2000random}, the union bound  and \eqref{def:perc_prob}, 
    \begin{equation}\label{eq:lower_perc_prob}
        p_{\vec{e}} \ge 1 - (1+t)N\exp\left(-\frac{\alpha}{4}(1-t)Nn^{-\varepsilon}\right) = 1 - \exp(-\Omega(n^{1/3 - \varepsilon})).
    \end{equation}
    Each edge $\vec{e}\in \mathbb{E}_n$ is open independently with probability $p_{\vec{e}}$, and the events $\{\square_{0,0}\rightarrow \square_{i,j}\}_{i,j}$ are increasing in $\Omega = \{0,1\}^{\mathbb{E}_n}$. Therefore by the FKG inequality, see e.g.~\cite[Theorem 2.16]{boucheron2003concentration},
    \begin{align*}
        \P\left\{\bigcap_{i+j = \frac{n^\varepsilon}{4}}\left\{\square_{0,0} \to \square_{i,j}\right\} \right\} &\ge \prod_{i+j = \frac{n^\varepsilon}{4}}\mathbb{P}\{\square_{0,0}\rightarrow \square_{i,j}\}\\
        &\ge \prod_{i+j = \frac{n^\varepsilon}{4}}
        p_{\vec{e}}^{n^\varepsilon/4},
    \end{align*}
    where for the second inequality we have used the fact that $\mathbb{P}\{\square_{0,0}\rightarrow \square_{i,j}\}$ is at least the probability that a single path of length $n^\varepsilon/4-1$ is open. Combining this with (\ref{eq:lower_perc_prob}) and the inequality $(1-x)^k\ge 1-xk$ for $x\in (0,1)$ yields the result.
    \end{proof}
\begin{lem}\label{lem:perc_btw_diag}
     Let $\e\in (0,1/3)$ with $n^{\e}/4\in \mathbb{N}$, then the probability of having an open path from a box $\square_{i,j}$ with $i+j = n^{\e}/4$ to a box $\square_{k,l}$ with $k+l = 2\bnum-n^{\e}/4$ in our directed percolation model can be bounded from below by
     \begin{equation*}
        \P\left\{\bigcup_{i+j = \frac{n^{\varepsilon}}{4}} \;\bigcup_{k+l = 2\bnum - \frac{n^\varepsilon}{4}} \{\square_{i,j}\rightarrow \square_{k,l}\}\right\}
        \geq 1 - \exp\left(-\Omega(n^{\e} \wedge n^{1/3-\e})\right).
    \end{equation*}
\end{lem}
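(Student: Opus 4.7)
The plan is to observe that the edges of $\mathbb E_n$ with $i+j\in R_2$ are i.i.d.\ Bernoulli under $\P$ with a common parameter that can be made as close to $1$ as we wish by enlarging $C$, and then obtain the crossing of the rhombus swept out by those diagonals via a standard supercritical oriented-percolation estimate.

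Concretely, I would first unwind \eqref{eq:ranges}: for any $\vec e$ with $i+j\in R_2$ we have $|T_{i+j}|\asymp Cn^{-1/3}$ and $(1-t)N\asymp C^2 n^{1/3}$, so $\alpha(1-t)N|T_{i+j}|=\Theta(C^3)$ and hence, uniformly in such $\vec e$,
\[
 1-p_{\vec e}=(1-\alpha|T_{i+j}|)^{(1-t)N}\le e^{-c_0 C^3}=:q.
\]
By taking $C$ large, $q$ becomes arbitrarily small. Because the edges with $i+j\in R_2$ are mutually independent under $\P$, Lemma~\ref{lem:perc_btw_diag} reduces to the following question about i.i.d.\ oriented bond percolation with parameter at least $1-q$: what is the probability that no directed right/up path crosses the rhombus lying between the diagonals $d_1=n^\e/4$ and $d_2=2\bnum-n^\e/4$, whose two short sides contain $\Theta(n^\e)$ boxes and whose height is $\Theta(n^{1/3})$?

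To bound this probability I would run a block-renormalization (Liggett--Schonmann--Stacey) argument: tile the rhombus by $K\times K$ blocks for some fixed $K$, and declare a block \emph{good} if a prescribed internal directed spanning pattern of edges is open. The good-block event depends on only $O(K^2)$ edges, so it has probability at least $1-O(K^2 q)$, which, for $K$ fixed and $q$ small, stochastically dominates a Bernoulli-$(1-o(1))$ site percolation on the coarse grid. The classical strip-crossing estimate for deeply supercritical 2D oriented percolation then yields a crossing failure probability $\exp(-\Omega(n^\e))$, since the rhombus has coarse width $\Theta(n^\e/K)$ between its two short sides. The competing $n^{1/3-\e}$ exponent in the statement arises when this estimate is conjoined with the Chernoff bound \eqref{eq:lower_perc_prob} for the $R_1$ and $R_3$ collars, which produces a factor of $\exp(-\Omega(n^{1/3-\e}))$; the lemma quotes the worse of the two rates.

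The main obstacle I expect is the renormalization step in this rhombus geometry: choosing the block size $K$ and the good-block pattern so that the good-block field both (a) dominates a genuinely supercritical Bernoulli percolation even with its short-range dependence, and (b) preserves an $n^\e$-scale bottleneck on the coarse lattice all the way across the rhombus. A direct Peierls-duality attack would be clean for undirected percolation but is made awkward here by the chirality of the oriented-percolation dual, which is why the block-renormalization route seems preferable.
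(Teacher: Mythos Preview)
Your approach would work, but it differs from the paper's, which carries out precisely the direct Peierls argument you set aside as ``awkward''. The paper conditions on the event $\mathcal E_n=\bigcap_{i+j=n^\varepsilon/4}\{\square_{0,0}\to\square_{i,j}\}$ from Lemma~\ref{lem:perc_to_diag}, so that the whole first diagonal already lies in the forward cluster of $\square_{0,0}$; if this cluster fails to reach the second diagonal, its boundary is a path of length $m\ge n^\varepsilon/2$ in the primal lattice joining the two long sides of the square. Your chirality worry is dispatched by the single observation that at least \emph{half} of any such boundary path's steps must cross closed directed edges of $\mathbb L_n$, so the first-moment bound on blocking paths is $\sum_{m\ge n^\varepsilon/2}3^m(1-p)^{\lfloor m/2\rfloor}=\exp(-\Omega(n^\varepsilon))$ once $C$ is large, since (as you computed) $1-p\le e^{-c_0C^3}$. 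No renormalization is used, and Liggett--Schonmann--Stacey would be vacuous here anyway since the $R_2$ bonds are already i.i.d. Note also that the $n^{1/3-\varepsilon}$ exponent in the statement does not arise from later conjoining with the $R_1$/$R_3$ collars as you suggest: it enters \emph{inside} the paper's proof of this lemma, as the cost of conditioning on $\mathcal E_n$ via Lemma~\ref{lem:perc_to_diag}. Your route, which uses only $R_2$ edges and takes the whole first diagonal as seed, would in fact yield the stronger bound $1-\exp(-\Omega(n^\varepsilon))$, making the minimum with $n^{1/3-\varepsilon}$ slack; the paper trades that sharpness for an elementary self-contained contour count in place of the block-renormalization machinery and black-box crossing estimate you invoke.
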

 
\begin{proof}
    We condition on the event \[\mathcal{E}_n = \bigcap_{i+j = \frac{n^\varepsilon}{4}} \{\square_{0,0} \rightarrow \square_{i,j}\},\] which by Lemma \ref{lem:perc_to_diag} occurs with probability $1 - \exp(-\Omega(n^{1/3 - \varepsilon})).$ 

    On this event, if there is no open path between $\{\square_{i,j}: i+j = n^\varepsilon/4\}$ and $\{\square_{k,l}: k+l = 2\bnum - n^\varepsilon/4\}$, then the connected component containing $\square_{0,0}$ and the vertices $\{\square_{i,j}: i+j = n^\varepsilon/4\}$ has a boundary which can be represented as a path in the lattice (rather than in the dual grid lattice) with mesh size $\ell_n$, with one endpoint in the top left-hand corner region of the unit square,  
    \[\mathrm{TC}_n:= \{(0,j\ell_n):j\in \{n^\varepsilon/4,\dots, \bnum + 1\}\}\cup \{(i\ell_n,0):i\in \{0,\dots,\bnum-n^\varepsilon/4\}\}\] and another in bottom right-hand corner of the unit square, \[\mathrm{BC}_n:=\{(0,j\ell_n):j\in \{0,\dots,\bnum-n^\varepsilon/4\}\}\cup \{(i\ell_n,0):i\in \{n^\varepsilon/4,\dots,\bnum+1 \}\}.\]
    See Figure \ref{fig:bound} for an illustration.
    \begin{SCfigure}[.8]
        \tikzset{every picture/.style={line width=0.75pt}} 

\begin{tikzpicture}[x=0.75pt,y=0.75pt,yscale=-1,xscale=1]
    \def\num{10}
    
    \foreach \x in {40,60,...,200} {
        \draw[gray, opacity=0.5, -] (\x,20) -- (\x,200);
        \foreach \z in {30, 50,70,90, 110, 130, 150, 170} {
        \node at (\x + 10,\z + 20)[circle,fill,inner sep=1.5pt,color={rgb, 255:red, 74; green, 144; blue, 226 }]{};
        }
        \node at (\x + 10,10 + 20)[color={rgb, 255:red, 74; green, 144; blue, 226 },circle,fill,inner sep=1.5pt]{};

    }

    \foreach \y in {20,40,...,180} {
        \draw[gray, opacity=0.5] (40,\y) -- (220,\y);
    }

    \draw[color={rgb, 255:red, 74; green, 144; blue, 226 }, ->] (50+3, 180 + 10) -- (50 + 17, 180+10);
    \draw[color={rgb, 255:red, 74; green, 144; blue, 226 }, ->] (70+3, 180 + 10) -- (70 + 17, 180+10);
     \draw[color={rgb, 255:red, 74; green, 144; blue, 226 }, ->] (90+3, 180 + 10) -- (90 + 17, 180+10);
      \draw[color={rgb, 255:red, 74; green, 144; blue, 226 }, ->] (50+3, 160 + 10) -- (50 + 17, 160+10);
    \draw[color={rgb, 255:red, 74; green, 144; blue, 226 }, ->] (70+3, 160 + 10) -- (70 + 17, 160+10);
    \draw[color={rgb, 255:red, 74; green, 144; blue, 226 }, ->] (50+3, 140 + 10) -- (50 + 17, 140+10);
   \draw[color={rgb, 255:red, 74; green, 144; blue, 226 }, ->] (50, 180 + 10) -- (50, 160+13);
    \draw[color={rgb, 255:red, 74; green, 144; blue, 226 }, ->] (70, 180 + 10) -- (70, 160+13);
   \draw[color={rgb, 255:red, 74; green, 144; blue, 226 }, ->] (90, 180 + 10) -- (90, 160+13);
   \draw[color={rgb, 255:red, 74; green, 144; blue, 226 }, ->] (50, 160 + 10) -- (50, 140+13);
   \draw[color={rgb, 255:red, 74; green, 144; blue, 226 }, ->] (50, 140 + 10) -- (50, 120+13);
   \draw[color={rgb, 255:red, 74; green, 144; blue, 226 }, ->] (70, 160 + 10) -- (70, 140+13);
   \draw[color={rgb, 255:red, 74; green, 144; blue, 226 }, ->] (90, 160 + 10) -- (90, 140+13);
       \draw[color={rgb, 255:red, 74; green, 144; blue, 226 }, ->] (90+3, 160 + 10) -- (107, 160+10);
               \draw[color=WildStrawberry, opacity = 0.75, ->] (150+3, 160 + 10) -- (150 + 17, 160+10);
       \draw[color=WildStrawberry, opacity = 0.75, ->] (150+3, 140 + 10) -- (150 + 17, 140+10);
       \draw[color=WildStrawberry, opacity = 0.75, ->] (150+3, 180 + 10) -- (150 + 17, 180+10);

        \draw[color={rgb, 255:red, 74; green, 144; blue, 226 }, ->] (130+3, 180 + 10) -- (130 + 17, 180+10);
        \draw[color={rgb, 255:red, 74; green, 144; blue, 226 }, ->] (110+3, 180 + 10) -- (110 + 17, 180+10);

        \draw[color={rgb, 255:red, 74; green, 144; blue, 226 }, ->] (110, 180 + 10) -- (110, 160 + 13);
        \draw[color={rgb, 255:red, 74; green, 144; blue, 226 }, ->] (110, 160 + 10) -- (110, 140+13);
        \draw[color={rgb, 255:red, 74; green, 144; blue, 226 }, ->] (110, 140 + 10) -- (110, 120+13);
        \draw[color=WildStrawberry, opacity=0.75, ->] (110, 120 + 10) -- (110, 100+13);
        \draw[color=WildStrawberry, opacity=0.75, ->] (90, 120 + 10) -- (90, 100+13);
        \draw[color=WildStrawberry, opacity=0.75, ->] (70, 100 + 10) -- (70, 80+13);
        \draw[color=WildStrawberry, opacity=0.75, ->] (50, 80 + 10) -- (50, 60+13);
        \draw[color=WildStrawberry, opacity=0.75, ->] (150, 140 + 10) -- (150, 120+13);
        \draw[color={rgb, 255:red, 74; green, 144; blue, 226 }, ->] (150, 160 + 10) -- (150, 140+13);
        \draw[color={rgb, 255:red, 74; green, 144; blue, 226 }, ->] (150, 180 + 10) -- (150, 160+13);
                \draw[color={rgb, 255:red, 74; green, 144; blue, 226 }, ->] (50, 120 + 10) -- (50, 100+13);
        \draw[color={rgb, 255:red, 74; green, 144; blue, 226 }, ->] (50, 100 + 10) -- (50, 80+13);
        \draw[color=WildStrawberry, opacity=0.75, ->] (50+3, 90 ) -- (50+17, 90);
        \draw[color={rgb, 255:red, 74; green, 144; blue, 226 }, ->] (50+3, 110 ) -- (50+17, 110);
        \draw[color=WildStrawberry, opacity=0.75, ->] (50+3, 130 ) -- (50+17, 130);
        \draw[color=WildStrawberry, opacity=0.75, ->] (70+3, 130 ) -- (70+17, 130);
        \draw[color=WildStrawberry, opacity=0.75, ->] (70+3, 110 ) -- (70+17, 110);
        \draw[color=WildStrawberry, opacity=0.75, ->] (70+3, 150 ) -- (70+17, 150);
        \draw[color=WildStrawberry, opacity=0.75, ->] (90+3, 150 ) -- (90+17, 150);
        \draw[color=WildStrawberry, opacity=0.75, ->] (90+3, 130 ) -- (90+17, 130);
        \draw[color={rgb, 255:red, 74; green, 144; blue, 226 }, ->] (90, 130+3 ) -- (90, 110+17);

               \draw[color=WildStrawberry, opacity = 0.75, ->] (130, 180 + 10) -- (130, 160+13);

               \draw[color={rgb, 255:red, 74; green, 144; blue, 226 }, ->] (70, 140 + 10) -- (70, 120+13);
                              \draw[color={rgb, 255:red, 74; green, 144; blue, 226 }, ->] (90, 140 + 10) -- (90, 120+13);

               \draw[color=WildStrawberry, opacity = 0.75, ->] (70, 120 + 10) -- (70, 100+13);

       \draw[color=WildStrawberry, opacity = 0.75, ->] (110+3, 160 + 10) -- (110 + 17, 160+10);
       \draw[color=WildStrawberry, opacity = 0.75, ->] (110+3, 140 + 10) -- (110 + 17, 140+10);
       \draw[color=WildStrawberry, opacity = 0.75, ->] (110+3, 120 + 10) -- (110 + 17, 120+10);

    \draw[color=black,line width=1.5, -](160, 140) -- (160, 200);
    \draw[color=black,line width=1.5, -](140, 140) -- (160, 140);
    \draw[color=black,line width=1.5, -](140, 140) -- (140, 180);
    \draw[color=black,line width=1.5, -](120, 180) -- (140, 180);
    \draw[color=black,line width=1.5, -](120, 180) -- (120, 120);
    \draw[color=black,line width=1.5, -](80, 120) -- (120, 120);
    \draw[color=black,line width=1.5, -](80, 100) -- (80, 120);
    \draw[color=black,line width=1.5, -](80, 100) -- (60, 100);
    \draw[color=black,line width=1.5, -](60, 80) -- (60, 100);
    \draw[color=black,line width=1.5, -](40, 80) -- (60, 80);

\fill[color=red, opacity = 0.4] (40,140) rectangle (60,200) ;
\fill[color=red, opacity = 0.4] (60,160) rectangle (80,200) ;
\fill[color=red, opacity = 0.4] (80,200) rectangle (100,180) ;
\fill[color=red, opacity = 0.4] (100,200) rectangle (120,200) ;
   \draw[color=gray,line width=1.5, -](36, 140) -- (36, 16);
   \draw[color=gray,line width=1.5, -](36, 16) -- (140, 16);
   \draw[color=gray,line width=1.5, -](100, 204) -- (224, 204);
   \draw[color=gray,line width=1.5, -](224, 100) -- (224, 204);
    \draw[black, opacity=0.5] (40,20) -- (220,20) -- (220,200) -- (40,200) -- cycle;

\fill[color=yellow, opacity = 0.4] (200,20) rectangle (220,100);
\fill[color=yellow, opacity = 0.4] (180,20) rectangle (200,80);
\fill[color=yellow, opacity = 0.4] (160,20) rectangle (180,60);
\fill[color=yellow, opacity = 0.4] (140,20) rectangle (160,40);

\draw (25,200.85) node  [font=\small,color={rgb, 255:red, 0; green, 0; blue, 0 }  ,opacity=1 ] [align=left] {\begin{minipage}[lt]{24.25pt}\setlength\topsep{0pt}
$\displaystyle ( 0,0)$
\end{minipage}};
\draw (230.17,10.85) node  [font=\small,color={rgb, 255:red, 0; green, 0; blue, 0 }  ,opacity=1 ] [align=left] {\begin{minipage}[lt]{25.61pt}\setlength\topsep{0pt}
$\displaystyle ( 1/2,1/2)$
\end{minipage}};

\draw (22,10) node  [font=\small,color=gray ,opacity=1 ] [align=left] {\begin{minipage}[lt]{24.25pt}\setlength\topsep{0pt}
$\displaystyle \mathrm{TC}_n$
\end{minipage}};

\draw (242,212) node  [font=\small,color=gray ,opacity=1 ] [align=left] {\begin{minipage}[lt]{24.25pt}\setlength\topsep{0pt}
$\displaystyle \mathrm{BC}_n$
\end{minipage}};
\end{tikzpicture}
        \caption{The connected component containing $\square_{0,0}$. Open edges are drawn in blue while closed edges are drawn in red. In black, the boundary of the connected component containing $\square_{0,0}$, as a path in the lattice with mesh size $\ell_n$. The bold-face grey lines border the regions $\mathrm{TC}_n$ and $\mathrm{BC}_n$. \\ }\label{fig:bound} 
    \end{SCfigure}
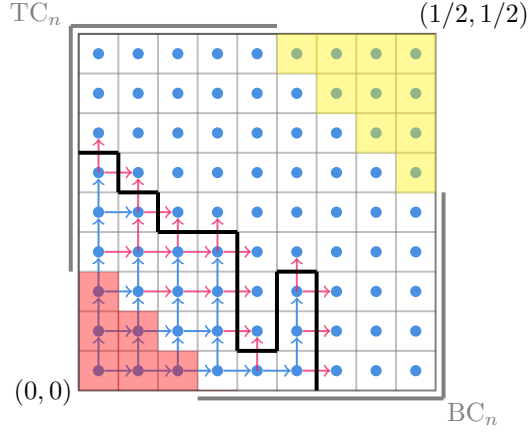
    
     Letting $P^{(i,j)}_n$ denote the number of paths started at $(i\ell_n,j\ell_n)\in \mathrm{TC}_n$ and ending in a point in $\mathrm{BC}$ it follows that 
    \begin{equation}\label{eq:goal}
        \mathbb{P}\left\{\bigcup_{i+j = \frac{n^\varepsilon}{4}}\bigcup_{k+l = 2\bnum - \frac{n^\varepsilon}{4}}\{\square_{i,j}\rightarrow \square_{k,l}\}~\Bigg |~ \mathcal{E}_n\right\} \ge 1 -\sum_{(i\ell_n,j\ell_n)\in \mathrm{TC}_n}\mathbb{P}\{P^{(i,j)}_n \ge 1\}.
    \end{equation}
    To prove Lemma~\ref{lem:perc_btw_diag} it remains to show that the right-hand side of the above inequality can be bounded from below by $1 - \exp(-\Omega(n^{\e}))$. To this end, let $(i\ell_n,j\ell_n)\in \mathrm{TC}_n$. We use the first moment method to bound $\mathbb{P}\{P^{(i,j)}_n \ge 1\}$.
    Each path that could form a boundary for the directed percolation has length $m\in \{n^{\e}/2,\dots, 2\bnum(\bnum +1) \}$, where we note that the upper bound is the total number of edges in the lattice and $2\bnum(\bnum+1) = \frac{4\sqrt{2}}{C}n^{1/3} + \frac{16}{C^2}n^{2/3} $. Lastly, in order for such a path to represent a boundary of the connected component, at least one half of its edges must intersect closed edges of $\mathbb{L}_n$, see again Figure \ref{fig:bound}. By \eqref{def:perc_prob} and \eqref{eq:ranges}, the directed edges intersecting this path are each closed independently with probability $1 - \p{B^{(n)} \ge 1}$, where $B^{(n)}$ is a  $\mathrm{Binomial}((1-t)N, \alpha|T_{n^\varepsilon/4} |)$ distributed random variable where \[|T_{n^{\e}/4}|=\frac{4(n^{\e} -2)n^{1/3 - \e}}{n^{1/3}(8\bnum - n^{\e})}.\] Since $\ell_n =(4\sqrt{2})^{-1}C n^{-1/3}$, $\bnum=(2\ell_n)^{-1}-1$ and $N= \frac{C^2}{32}n^{1/3}$, it follows that,    \begin{align*}
        \mathbb{E}\left\{B^{(n)}\right\} &= \alpha (1-t)N\frac{4(n^\varepsilon - 2)n^{1/3 - \varepsilon}}{n^{1/3}(8\bnum - n^{\varepsilon})}=\alpha(1-t)N\frac{4(n^\varepsilon - 2)n^{-\varepsilon}}{8(2\sqrt{2}n^{1/3}/C - 1) - n^{\varepsilon}},\end{align*} which tends to $\alpha(1-t)C^3/{(128\sqrt{2})}$ as  $n\rightarrow \infty$.
        
        It follows that for all $\delta > 0$, and $n$ sufficiently large, 
    \[\p{B^{(n)} \ge 1} \ge \p{\mathrm{Poisson}\left(\frac{\alpha(1-t)C^3}{128\sqrt{2}}\right) \ge 1} - \delta,\]
    and so
    \begin{align}
        1 - \p{B^{(n)} \ge 1} \leq \exp\left(-\frac{\alpha(1-t)C^3}{128\sqrt{2}}\right) + \delta.
    \end{align}
    Let $\delta > 0$. Then, we obtain that for sufficiently large $n$,
    \begin{align}
          \mathbb{E}\left\{P^{(i,j)}_n\right\} & \leq \sum_{m=n^{\e}/2}^{2\bnum(\bnum + 1)}3^m(  1 - \p{B \ge 1})^{\lfloor m/2\rfloor} \\
          & \leq \sum_{m=n^{\e}/2}^{4\bnum^{2}}3^m\left(\exp\left(\frac{\alpha(1-t)C^3}{128\sqrt{2}}\right) + \delta\right)^{\lfloor m/2\rfloor}\notag \\
          & \leq \ell_n^{-2}\left(10 \exp\left(-\frac{\alpha(1-t)C^3}{128\sqrt{2}}\right)\right)^{n^{\e}/4},
    \end{align}
    where the final inequality follows from taking $\delta>0$ sufficiently small. Taking $C>0$ sufficiently large, so that 
    \begin{align}
        10 \exp\left(-\frac{\alpha(1-t)C^3}{128\sqrt{2}}\right)  < e^{-4},
    \end{align}
    we obtain that $ \mathbb{E}\{P^{(i,j)}_n\} =\exp(-\Omega(n^{-\e}))$. (In fact, taking $t=1/100$, $C\ge 8\alpha^{-1/3}$ suffices).  The result then follows from (\ref{eq:goal}) by an application of Markov's inequality and the fact that $\#\{(i\ell_n, j\ell_n) \in \mathrm{TC}_n\} = O(\ell_n^{-1}) = O(n^{1/3})$.\qedhere
\end{proof}

\subsection{Proof of Proposition \ref{prop:lower_bound}.}
In this subsection we will show how Lemmas~\ref{lem:perc_to_diag} and ~\ref{lem:perc_btw_diag} imply {that for $\e \in (0, 1/9)$,}
\begin{align}\label{eq:prob_zero_to_one}
    \p{\tpathn{(0,0)}{(\frac{1}{2},\frac{1}{2})}} & \geq 1 -\exp(-\Omega(n^{1/3-{3}\e})),
\end{align}
for $(\mathcal{G}_n)_{n\geq 1}$ as in Proposition~\ref{prop:lower_bound} and where we work with a modified point process obtained by adding the points $(0,0)$ and $(1/2,1/2)$ to $\X_n$. To see why Proposition~\ref{prop:lower_bound} follows from \eqref{eq:prob_zero_to_one}, note that the maximal distance between two vertices in $\G_n$ is $\sqrt{2}/2$, for instance between $(0,0)$ and $(1/2,1/2)$. Since we are working on the torus $[0,1]^d$, the same proof construction can be used for any pairs of points in $\G_n$, by shifting the first and third regions (highlighted in red and yellow respectively in Figure \ref{fig:shift}), and shrinking the second region, (the white region in Figure \ref{fig:shift}) appropriately while keeping the box side-length $\ell_n = (4\sqrt{2})^{-1}r_n$ as before. If the first and third regions overlap (and so there is no white region in Figure \ref{fig:shift}), we shrink the first and third regions appropriately while keeping the same box side-length as in the original construction. See Figure \ref{fig:shift}.  

\begin{figure}[h]
\centering
\begin{minipage}{0.5\textwidth}
\resizebox{0.9\textwidth}{!}{\tikzset{every picture/.style={line width=0.75pt}} 

\begin{tikzpicture}[x=0.75pt,y=0.75pt,yscale=-1,xscale=1]

\draw  [draw opacity=0] (40,20) -- (340,20) -- (340,320) -- (40,320) -- cycle ; \draw  [color={rgb, 255:red, 155; green, 155; blue, 155 }  ,draw opacity=0.5 ] (40,20) -- (40,320)(60,20) -- (60,320)(80,20) -- (80,320)(100,20) -- (100,320)(120,20) -- (120,320)(140,20) -- (140,320)(160,20) -- (160,320)(180,20) -- (180,320)(200,20) -- (200,320)(220,20) -- (220,320)(240,20) -- (240,320)(260,20) -- (260,320)(280,20) -- (280,320)(300,20) -- (300,320)(320,20) -- (320,320) ; \draw  [color={rgb, 255:red, 155; green, 155; blue, 155 }  ,draw opacity=0.5 ] (40,20) -- (340,20)(40,40) -- (340,40)(40,60) -- (340,60)(40,80) -- (340,80)(40,100) -- (340,100)(40,120) -- (340,120)(40,140) -- (340,140)(40,160) -- (340,160)(40,180) -- (340,180)(40,200) -- (340,200)(40,220) -- (340,220)(40,240) -- (340,240)(40,260) -- (340,260)(40,280) -- (340,280)(40,300) -- (340,300) ; \draw  [color={rgb, 255:red, 155; green, 155; blue, 155 }  ,draw opacity=0.5 ]  ;
\draw   (40,20) -- (340,20) -- (340,320) -- (40,320) -- cycle ;
\draw [line width = 0.5mm,color={rgb, 255:red, 74; green, 144; blue, 226 }  ,draw opacity=1 ]   (70,250) .. controls (70.33,52) and (291,199.33) .. (290,70) ;
\draw [line width = 0.5mm,color={rgb, 255:red, 74; green, 144; blue, 226 }  ,draw opacity=1 ]   (50,310) -- (130,310) ;
\draw [line width = 0.5mm,color={rgb, 255:red, 74; green, 144; blue, 226 }  ,draw opacity=1 ]   (50,310) -- (50,230) ;
\draw [line width = 0.5mm,color={rgb, 255:red, 74; green, 144; blue, 226 }  ,draw opacity=1 ]   (50,290) -- (110,290) ;
\draw [line width = 0.5mm,color={rgb, 255:red, 74; green, 144; blue, 226 }  ,draw opacity=1 ]   (50,270) -- (90,270) ;
\draw [line width = 0.5mm,color={rgb, 255:red, 74; green, 144; blue, 226 }  ,draw opacity=1 ]   (50,250) -- (70,250) ;
\draw [line width = 0.5mm,color={rgb, 255:red, 74; green, 144; blue, 226 }  ,draw opacity=1 ]   (70,310) -- (70,250) ;
\draw [line width = 0.5mm,color={rgb, 255:red, 74; green, 144; blue, 226 }  ,draw opacity=1 ]   (90,310) -- (90,270) ;
\draw [line width = 0.5mm,color={rgb, 255:red, 74; green, 144; blue, 226 }  ,draw opacity=1 ]   (110,310) -- (110,290) ;
\draw [line width = 0.5mm,color={rgb, 255:red, 74; green, 144; blue, 226 }  ,draw opacity=1 ]   (330,30) -- (250,30) ;
\draw [line width = 0.5mm,color={rgb, 255:red, 74; green, 144; blue, 226 }  ,draw opacity=1 ]   (330,30) -- (330,110) ;
\draw [line width = 0.5mm,color={rgb, 255:red, 74; green, 144; blue, 226 }  ,draw opacity=1 ]   (330,50) -- (270,50) ;
\draw [line width = 0.5mm,color={rgb, 255:red, 74; green, 144; blue, 226 }  ,draw opacity=1 ]   (330,70) -- (290,70) ;
\draw [line width = 0.5mm,color={rgb, 255:red, 74; green, 144; blue, 226 }  ,draw opacity=1 ]   (330,90) -- (310,90) ;
\draw [line width = 0.5mm, color={rgb, 255:red, 74; green, 144; blue, 226 }  ,draw opacity=1]   (310,30) -- (310,90) ;
\draw [line width = 0.5mm,color={rgb, 255:red, 74; green, 144; blue, 226 }  ,draw opacity=1 ]   (290,30) -- (290,70) ;
\draw [line width = 0.5mm,color={rgb, 255:red, 74; green, 144; blue, 226 }  ,draw opacity=1 ]   (270,30) -- (270,50) ;

\draw (39.83,327.85) node  [font=\small,color={rgb, 255:red, 0; green, 0; blue, 0 }  ,opacity=1 ] [align=left] {\begin{minipage}[lt]{24.25pt}\setlength\topsep{0pt}
$\displaystyle ( 0,0)$
\end{minipage}};
\draw (340.17,10.85) node  [font=\small,color={rgb, 255:red, 0; green, 0; blue, 0 }  ,opacity=1 ] [align=left] {\begin{minipage}[lt]{25.61pt}\setlength\topsep{0pt}
$\displaystyle ( 1/2,1/2)$
\end{minipage}};
\draw (110,329) node  [font=\small,color={rgb, 255:red, 0; green, 0; blue, 0 }  ,opacity=1 ] [align=left] {\begin{minipage}[lt]{24.25pt}\setlength\topsep{0pt}
$\displaystyle (n^\varepsilon \ell_n/4,0)$
\end{minipage}};
\draw (250,10) node  [font=\small,color={rgb, 255:red, 0; green, 0; blue, 0 }  ,opacity=1 ] [align=right] {\begin{minipage}[lt]{140pt}\setlength\topsep{0pt}
$\displaystyle (1/2 - \ell_n(1+n^\varepsilon/4), 1/2)$
\end{minipage}};
\fill[color=red, opacity = 0.2] (40,240) rectangle (60,320) ;
\fill[color=red, opacity = 0.2] (60,260) rectangle (80,320) ;
\fill[color=red, opacity = 0.2] (80,280) rectangle (100,320) ;
\fill[color=red, opacity = 0.2] (100,300) rectangle (120,320) ;
\fill[color=red, opacity = 0.2] (120,320) rectangle (140,320) ;
\fill[color=yellow, opacity = 0.2] (240,20) rectangle (260,40) ;
\fill[color=yellow, opacity = 0.2] (260,20) rectangle (280,60) ;
\fill[color=yellow, opacity = 0.2] (280,20) rectangle (300,80) ;
\fill[color=yellow, opacity = 0.2] (300,20) rectangle (320,100) ;
\fill[color=yellow, opacity = 0.2] (320,20) rectangle (340,120) ;

\end{tikzpicture}}
\end{minipage}%
\begin{minipage}{0.5\textwidth}
\resizebox{0.9\textwidth}{!}{\tikzset{every picture/.style={line width=0.75pt}} 

\begin{tikzpicture}[x=0.75pt,y=0.75pt,yscale=-1,xscale=1]

    \draw  [draw opacity=0] (40,20) -- (340,20) -- (340,320) -- (40,320) -- cycle ; 
    
\draw  [draw opacity=1] 
(40, 320) -- (40, 80);
\draw  [draw opacity=1] 
(40, 80) -- (280, 80);
\draw  [draw opacity=1] 
(280, 80) -- (280, 320);
\draw  [draw opacity=1] 
(280, 320) -- (40, 320);
\draw  [color={rgb, 255:red, 155; green, 155; blue, 155 }  ,draw opacity=0.5 ] (60, 320) -- (60,80);
\draw  [color={rgb, 255:red, 155; green, 155; blue, 155 }  ,draw opacity=0.5 ] (80, 320) -- (80,80);
\draw  [color={rgb, 255:red, 155; green, 155; blue, 155 }  ,draw opacity=0.5 ] (100, 320) -- (100,80);
\draw  [color={rgb, 255:red, 155; green, 155; blue, 155 }  ,draw opacity=0.5 ] (120, 320) -- (120,80);
\draw  [color={rgb, 255:red, 155; green, 155; blue, 155 }  ,draw opacity=0.5 ] (140, 320) -- (140,80);
\draw  [color={rgb, 255:red, 155; green, 155; blue, 155 }  ,draw opacity=0.5 ] (160, 320) -- (160,80);
\draw  [color={rgb, 255:red, 155; green, 155; blue, 155 }  ,draw opacity=0.5 ] (180, 320) -- (180,80);
\draw  [color={rgb, 255:red, 155; green, 155; blue, 155 }  ,draw opacity=0.5 ] (200, 320) -- (200,80);
\draw  [color={rgb, 255:red, 155; green, 155; blue, 155 }  ,draw opacity=0.5 ] (220, 320) -- (220,80);
\draw  [color={rgb, 255:red, 155; green, 155; blue, 155 }  ,draw opacity=0.5 ] (240, 320) -- (240,80);
\draw  [color={rgb, 255:red, 155; green, 155; blue, 155 }  ,draw opacity=0.5 ] (260, 320) -- (260,80);
\draw  [color={rgb, 255:red, 155; green, 155; blue, 155 }  ,draw opacity=0.5 ] (40, 280) -- (280,280);
\draw  [color={rgb, 255:red, 155; green, 155; blue, 155 }  ,draw opacity=0.5 ] (40, 300) -- (280,300);
\draw  [color={rgb, 255:red, 155; green, 155; blue, 155 }  ,draw opacity=0.5 ] (40, 260) -- (280,260);\draw  [color={rgb, 255:red, 155; green, 155; blue, 155 }  ,draw opacity=0.5 ] (40, 240) -- (280,240);
\draw  [color={rgb, 255:red, 155; green, 155; blue, 155 }  ,draw opacity=0.5 ] (40, 220) -- (280,220);\draw  [color={rgb, 255:red, 155; green, 155; blue, 155 }  ,draw opacity=0.5 ] (40, 200) -- (280,200);\draw  [color={rgb, 255:red, 155; green, 155; blue, 155 }  ,draw opacity=0.5 ] (40, 180) -- (280,180);
\draw  [color={rgb, 255:red, 155; green, 155; blue, 155 }  ,draw opacity=0.5 ] (40, 160) -- (280,160);
\draw  [color={rgb, 255:red, 155; green, 155; blue, 155 }  ,draw opacity=0.5 ] (40, 140) -- (280,140);\draw  [color={rgb, 255:red, 155; green, 155; blue, 155 }  ,draw opacity=0.5 ] (40, 120) -- (280,120);\draw  [color={rgb, 255:red, 155; green, 155; blue, 155 }  ,draw opacity=0.5 ] (40, 100) -- (280,100);
\draw   (40,20) -- (340,20) -- (340,320) -- (40,320) -- cycle ;
\draw [line width = 0.5mm,color={rgb, 255:red, 74; green, 144; blue, 226 }  ,draw opacity=1 ] (50, 230) -- (50, 310);
\draw [line width = 0.5mm,color={rgb, 255:red, 74; green, 144; blue, 226 }  ,draw opacity=1 ] (70, 250) -- (70, 310);
\draw [line width = 0.5mm,color={rgb, 255:red, 74; green, 144; blue, 226 }  ,draw opacity=1 ] (90, 270) -- (90, 310);
\draw [line width = 0.5mm,color={rgb, 255:red, 74; green, 144; blue, 226 }  ,draw opacity=1 ] (110, 290) -- (110, 310);

\draw [line width = 0.5mm,color={rgb, 255:red, 74; green, 144; blue, 226 }  ,draw opacity=1 ] (50, 310) -- (130, 310);
\draw [line width = 0.5mm,color={rgb, 255:red, 74; green, 144; blue, 226 }  ,draw opacity=1 ] (50, 290) -- (110, 290);
\draw [line width = 0.5mm,color={rgb, 255:red, 74; green, 144; blue, 226 }  ,draw opacity=1 ] (50, 270) -- (90, 270);
\draw [line width = 0.5mm,color={rgb, 255:red, 74; green, 144; blue, 226 }  ,draw opacity=1 ] (50, 250) -- (70, 250);
\draw [line width = 0.5mm,color={rgb, 255:red, 74; green, 144; blue, 226 }  ,draw opacity=1 ]   (70,250) .. controls (80,80) and (260,300) .. (230,130) ;

\draw [line width = 0.5mm,color={rgb, 255:red, 74; green, 144; blue, 226 }  ,draw opacity=1 ] (210, 90) -- (210, 110);
\draw [line width = 0.5mm,color={rgb, 255:red, 74; green, 144; blue, 226 }  ,draw opacity=1 ] (230, 90) -- (230, 130);
\draw [line width = 0.5mm,color={rgb, 255:red, 74; green, 144; blue, 226 }  ,draw opacity=1 ] (250, 90) -- (250, 150);
\draw [line width = 0.5mm,color={rgb, 255:red, 74; green, 144; blue, 226 }  ,draw opacity=1 ] (270, 90) -- (270, 170);
\draw [line width = 0.5mm,color={rgb, 255:red, 74; green, 144; blue, 226 }  ,draw opacity=1 ] (190, 90) -- (270, 90);
\draw [line width = 0.5mm,color={rgb, 255:red, 74; green, 144; blue, 226 }  ,draw opacity=1 ] (210, 110) -- (270, 110);
\draw [line width = 0.5mm,color={rgb, 255:red, 74; green, 144; blue, 226 }  ,draw opacity=1 ] (230, 130) -- (270, 130);
\draw [line width = 0.5mm,color={rgb, 255:red, 74; green, 144; blue, 226 }  ,draw opacity=1 ] (250, 150) -- (270, 150);

\fill[color=red, opacity = 0.2] (40,240) rectangle (60,320) ;
\fill[color=red, opacity = 0.2] (60,260) rectangle (80,320) ;
\fill[color=red, opacity = 0.2] (80,280) rectangle (100,320) ;
\fill[color=red, opacity = 0.2] (100,300) rectangle (120,320) ;
\fill[color=red, opacity = 0.2] (120,320) rectangle (140,320) ;

\fill[color=yellow, opacity = 0.2] (180,80) rectangle (200,100) ;
\fill[color=yellow, opacity = 0.2] (200,80) rectangle (220,120) ;
\fill[color=yellow, opacity = 0.2] (220,80) rectangle (240,140) ;
\fill[color=yellow, opacity = 0.2] (240,80) rectangle (260,160) ;
\fill[color=yellow, opacity = 0.2] (260,80) rectangle (280,180) ;
\draw (39.83,327.85) node  [font=\small,color={rgb, 255:red, 0; green, 0; blue, 0 }  ,opacity=1 ] [align=left] {\begin{minipage}[lt]{24.25pt}\setlength\topsep{0pt}
$\displaystyle ( 0,0)$
\end{minipage}};
\draw (340.17,10.85) node  [font=\small,color={rgb, 255:red, 0; green, 0; blue, 0 }  ,opacity=1 ] [align=left] {\begin{minipage}[lt]{25.61pt}\setlength\topsep{0pt}
$\displaystyle ( 1/2,1/2)$
\end{minipage}};

\draw (280,70) node  [font=\small,color={rgb, 255:red, 0; green, 0; blue, 0 }  ,opacity=1 ] [align=left] {\begin{minipage}[lt]{25.61pt}\setlength\topsep{0pt}
$\displaystyle (x,y)$
\end{minipage}};

\draw (200,70) node  [font=\small,color={rgb, 255:red, 0; green, 0; blue, 0 }  ,opacity=1 ] [align=left] {\begin{minipage}[lt]{100pt}\setlength\topsep{0pt}
$\displaystyle (x - \ell_n(1+n^\varepsilon/4),y)$
\end{minipage}};
\draw (170,330) node  [font=\small,color={rgb, 255:red, 0; green, 0; blue, 0 }  ,opacity=1 ] [align=left] {\begin{minipage}[lt]{100pt}\setlength\topsep{0pt}
$\displaystyle (n^\varepsilon \ell_n/4,0)$
\end{minipage}};

\end{tikzpicture}}
\end{minipage}
\caption{ On the left, an illustration of the construction of $\protect \tpathn{(0,0)}{(\frac{1}{2},\frac{1}{2})}$. On the right, an illustration of the alteration of this construction for a temporal path between $(0,0)$ and an arbitrary point $(x,y)$.
}\label{fig:shift}
\end{figure}
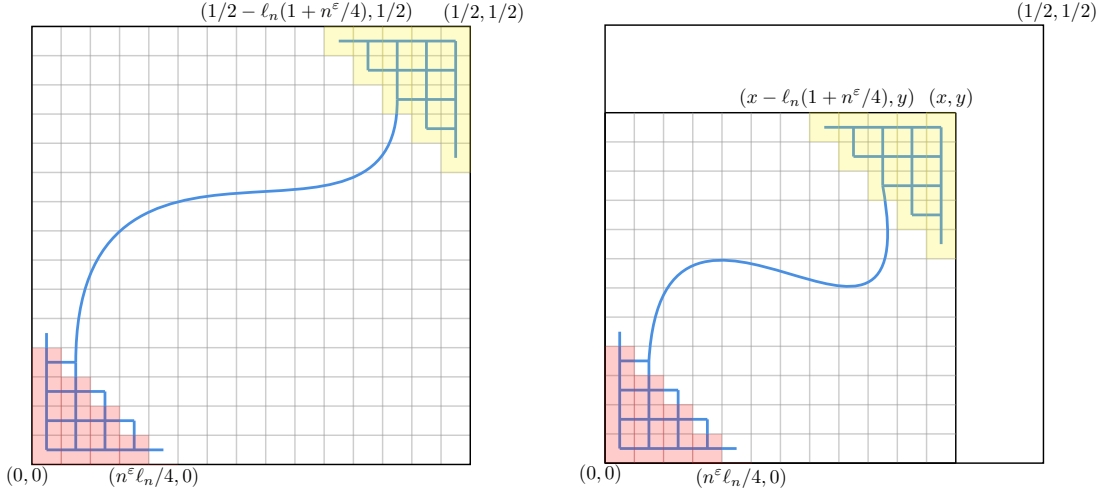

A union bound over all possible pairs of vertices then gives
\begin{align}
    \p{\forall u,v \in \G_n, \tpathn{u}{v}} & \geq 1 - n^2 \exp(-\Omega(n^{1/3-{3}\e})),
\end{align}
proving Proposition~\ref{prop:lower_bound} for $d=2$.
\begin{proof}
    [Proof of Proposition \ref{prop:lower_bound} with $d = 2$]
 As discussed above, to prove Proposition \ref{prop:lower_bound}, it suffices to prove \eqref{eq:prob_zero_to_one}. Recall $(\square_{i,j})_{i,j}$ from \eqref{def:boxes}. We begin by defining $A_n$ as the event that the number of vertices in each box $\square_{i,j},$ is well-concentrated. More precisely, for $t\in (0,1)$ let 
\begin{align}\label{def:event_concentration_vij}
    A_n = A_n(t) := \left\{\max_{i,j}\left||\X_n \cap \square_{i,j}| - N\right| <tN\right\},
\end{align}
recalling that $|\X_n \cap \square_{i,j}|$ is a $\operatorname{Binomial}(n,\text{vol}(\square_{0,0}))$ random variable and  $N = n \text{vol}(\square_{0,0}) = (C^2/32)n^{1/3}$.  By the union bound and standard concentration inequalities for Binomial random variables \cite[Theorem~2.1]{janson2000random}, it holds that for all $t\in (0,1)$, for $n \ge 1$,
   \begin{equation}\label{lem:concentration_vij}\mathbf{P}\left\{ A_n(t)^c\right\} \le \frac{16n^{2/3}}{C^2}\exp \left( - \frac{(Ct)^2}{8}n^{1/3}\right){= \exp(-\Omega(n^{1/3}))}.\end{equation}
    We therefore focus on proving that \[\p{\tpathn{(0,0)}{(\frac{1}{2},\frac{1}{2})}\mid A_n}\] can be bounded from below by the right-hand side of \eqref{eq:prob_zero_to_one} for the remainder of this section. We lower bound this probability by that of having a temporal path from $(0,0)$ to $(\frac{1}{2},\frac{1}{2})$ using only edges $xy$ satisfying $x\in \X_n \cap \square_{i,j}$, $y \in \X_n \cap (\square_{i+1,j} \cup \square_{i,j+1})$ and $\tau_{xy}\in T_{i,j}$. Observe that by our choice of box side-length $\ell_n = (4\sqrt{2})^{-1}r_n$ we have that an edge $xy$ with $x\in \X_n \cap \square_{i,j}$, $y \in \X_n \cap (\square_{i+1,j} \cup \square_{i,j+1})$ exists with probability $\alpha$. Further remark that for all $i,j \in [\bnum]_0$ and $\vec{e} = (\square_{i,j}, \square_{i',j'}) \in \mathbb{E}_n$,
   \begin{align}
       & \p{\forall x\in \X_n \cap \square_{i,j}, \, \exists y \in \X_n \cap \square_{i',j'} \text{ s.t. } xy \in E(G_n), \tau_{xy} \in T_{i+j} {\mid} A_n}\notag \\
       & \geq  \p{\prod_{k\in [(1+t)N]}B_{\vec{e},k} \geq 1 },
   \end{align}
   and given $x\in \X_n \cap \square_{i,j} $,
   \begin{align}
       \p{\exists y \in \X_n \cap \square_{i',j'} \text{ s.t. } xy \in E(G_n), \tau_{xy} \in T_{i+j} {\mid} A_n} \geq  \p{B_{\vec{e},1} \geq 1 },
   \end{align}
   where $(B_{\vec{e}, k})_{k\geq 1}$ are i.i.d.\ $\text{Binomial}((1-t)N, \alpha|T_{i,j}|)$ distributed random variables. 
   {Finally, we will ensure that if we can reach some vertex in the final box with a temporal path using only time-stamps in $[0,1-n^{-\varepsilon}]$, then we can complete the temporal path to $(\tfrac{1}{2},\tfrac{1}{2})$ using two further edges. To be precise, }{on the event $A_n$, given $x\in \X_n \cap \square_{\bnum, \bnum}$, the probability of having a temporal path from $x$ to $(\frac{1}{2},\frac{1}{2})$ only using edges with time-stamps in $(1-n^{-\varepsilon}, 1]$ is bounded from below by the probability of having $n^{1/3-2\varepsilon}$ vertices $y\in \X_n \cap \square_{\bnum, \bnum}$ such that $xy \in E(G_n)$ and $\tau_{xy}\in (1-n^{-\varepsilon}, 1 - n^{-\varepsilon}/2]$ and such that one of these $y$ satisfies $y(\frac{1}{2},\frac{1}{2}) \in E(G_n)$ and $\tau_{y(\frac{1}{2},\frac{1}{2})}\in (1 - n^{-\varepsilon}/2, 1]$. This probability can further be bounded from below by 
   \begin{align}\label{eq:last_step_lowerbd}
       \p{\text{Binomial}((1-t)N, \alpha n^{-\varepsilon}/2)\geq n^{1/3-2\varepsilon}} \times  \p{\text{Binomial}(n^{1/3-2\varepsilon}, \alpha n^{-\varepsilon}/2)\geq 1}. 
   \end{align}
   The mean of the Binomial in the first probability is of the order of $n^{1/3 - \varepsilon}$, thus by standard concentration inequalities for Binomial random variables \cite[Theorem~2.1]{janson2000random}, this probability is at least $1- \exp(-\Omega(n^{1/3-\varepsilon}))$. The second term in \eqref{eq:last_step_lowerbd} equals $1-(1- \alpha n^{-\varepsilon}/2)^{n^{1/3 - 2\varepsilon}} {=} 1 - \exp(-\Omega(n^{1/3 - 3\varepsilon}))$. Putting everything together, on the event $A_n$, given $x\in \X_n \cap \square_{\bnum, \bnum}$, the probability of having a temporal path from $x$ to $(\frac{1}{2},\frac{1}{2})$ only using edges with time-stamps in $(1-n^{-\varepsilon}, 1]$ is bounded from below by $1-\exp(-\Omega(n^{1/3 - 3\varepsilon}))$.}\\
    From this, we can see that
   \begin{align}\label{eq:prob_zero_to_half}
       \p{\tpathn{(0,0)}{(\frac{1}{2},\frac{1}{2})} \mid A_n}& \ge \left(1 - \exp(- \Omega(n^{1/3 -3\e}))\right)\mathbb{P}\left\{\square_{0,0}\rightarrow \square_{\bnum,\bnum}\right\}.
   \end{align}
    By independence of the $(p_{\vec{e}})_{\vec{e}}$,  $\mathbb{P}\{\square_{0,0} \to \square_{\bnum,\bnum}\}$ can be bounded from below by 
   \begin{align}
      & \P\left\{ \bigcap_{i+j = \frac{n^{\varepsilon}}{4}-1} \{\square_{0,0} \to \square_{i,j}\}\right\} \label{eq:prob_to_diag}  \\
      & \quad\times\P\left\{ \bigcup_{i+j = \frac{n^{\varepsilon}}{4}-1} \bigcup_{k+l = 2\bnum - n^{\e}/4} \{\square_{i,j} \to \square_{k,l}\}\right\} \label{eq:prob_btw_diag}\\
         &\quad\times \P\left\{ \bigcap_{i + j = 2\bnum - n^{\e}/4 }\{\square_{i,j} \to \square_{\bnum,\bnum} \}\right\}. \label{eq:prob_from_diag}
   \end{align}
    Lemmas~\ref{lem:perc_to_diag} and \ref{lem:perc_btw_diag} yield a lower bound for \eqref{eq:prob_to_diag} and \eqref{eq:prob_btw_diag}. Further, since the time-stamp ranges in the lower left and upper right triangle regions, $\{\square_{i,j}: i+ j \leq n^{\e}/4-1\}$ and $\{\square_{i,j}: i+ j \geq 2\bnum - n^{\e}/4\}$ respectively, all satisfy $|T_{i+j}| = n^{-\e}$, we can argue by symmetry that the lower bound in Lemma~\ref{lem:perc_to_diag} also holds for \eqref{eq:prob_from_diag}, giving
    \begin{equation*}
         \P\{\square_{0,0} \to \square_{\bnum, \bnum}\}  \geq 1 - \exp(-\Omega(n^{1/3-\e})) - \exp(-\Omega(n^{\e})).
  \end{equation*}
    {Together with \eqref{eq:prob_zero_to_half} and \eqref{lem:concentration_vij} we conclude \eqref{eq:prob_zero_to_one}, proving Proposition \ref{prop:lower_bound}.}  \qedhere
   \end{proof}
    
   \subsubsection{Extension to dimension $d > 2$}\label{sec:dim}
    The proof of Proposition \ref{prop:lower_bound} in dimension $d > 2$ is almost identical to that in dimension $d = 2$. Let $\mbox{\mancube}_{\vec{0}}, \mbox{\mancube}_{\vec{1}}$ be the $d$-dimensional boxes in the dual grid lattice on $[0,1]^d$ with mesh size $\ell_n = {(4\sqrt{d})^{-1}}r_n$ which contain the points $\vec{0}$, $\vec{1}\in [0,1]^d$ respectively. By considering the same percolation model as in subsection \ref{sec:directed_percolation} along the $d$-dimensional boxes that intersect a slab which contains the points $\mbox{\mancube}_{\vec{0}},$ and $ \mbox{\mancube}_{\vec{1}}$, $d$-dimensional versions of Lemmas \ref{lem:perc_to_diag} and \ref{lem:perc_btw_diag} hold using the same proof ideas. This implies that there exists an open path from $\mbox{\mancube}_{\vec{0}}$ to $\mbox{\mancube}_{\vec{1}}$. The remainder of the proof follows similarly to that of Proposition \ref{prop:lower_bound} with $d = 2$. 

\section*{Acknowledgements}

This research was initiated during the Seventeenth Annual Workshop on Probability and Combinatorics at McGill University's Bellairs Institute in Holetown, Barbados. We thank the Bellairs Institute for its hospitality. 
AB was supported by NSF GRFP 2141064 and Simons Investigator Award 622132 to Elchanan Mossel. 
SD acknowledges
the financial support of the CogniGron research center
and the Ubbo Emmius Funds (University of Groningen).
CK was supported by DFG project 444092244 “Condensation in random geometric graphs” within the priority programme SPP 2265.
GL acknowledges the support of Ayudas Fundación BBVA a
Proyectos de Investigación Científica 2021 and
the Spanish Ministry of Economy and Competitiveness grant PID2022-138268NB-I00, financed by MCIN/AEI/10.13039/501100011033,
FSE+MTM2015-67304-P, and FEDER, EU.).
RM was supported by the EPSRC Centre for Doctoral Training in Mathematics of Random Systems: Analysis, Modelling and Simulation (EP/S023925/1).

\bibliographystyle{abbrv}
\bibliography{biblio}

\appendix

\end{document}